\titleformat{\subsection}{\it}{\thesubsection.\enspace}{1pt}{}
\def\ps@pprintTitle{%
   \let\@oddhead\@empty
   \let\@evenhead\@empty
   \let\@oddfoot\@empty
  \let\@evenfoot\@oddfoot
}
\newtheorem{theo}{Theorem}[section]
\newtheorem{lemm}[theo]{Lemma}
\newtheorem{defi}[theo]{Definition}
\newtheorem{prop}[theo]{Proposition}
\newtheorem{rema}[theo]{Remark}
\numberwithin{equation}{section}
\def\bel{\begin{equation}\label}
\def\eeq{\end{equation}}
\newcommand{\beq}{\begin{equation}}
\newcommand{\beno}{\begin{equation*}}
\newcommand{\eeno}{\end{equation*}}
\begin{document}
\title{The estimate of lifespan and local well-posedness for the non-resistive MHD equations in homogeneous Besov spaces}
\author{
Weikui $\mbox{Ye}^1$\footnote{email: 904817751@qq.com}\quad
Wei $\mbox{Luo}^1$\footnote{email: luowei23@mail2.sysu.edu.cn}
\quad and\quad
 Zhaoyang $\mbox{Yin}^{1,2}$\footnote{email: mcsyzy@mail.sysu.edu.cn}\\
 $^1\mbox{Department}$ of Mathematics,
Sun Yat-sen University,\\ Guangzhou, 510275, China\\
$^2\mbox{Faculty}$ of Information Technology,\\ Macau University of Science and Technology, Macau, China}
\date{}
\maketitle
\begin{abstract}
 In this paper, we mainly investigate the Cauchy problem of the non-resistive MHD equation. We first establish the local existence in the homogeneous Besov space $\dot{B}^{\frac{d}{p}-1}_{p,1}\times \dot{B}^{\frac{d}{p}}_{p,1}$ with $p<\infty$, and give a lifespan $T$ of the solution which depends on the norm of the Littlewood-Paley decomposition of the initial data. Then, we prove that if the initial data $(u^n_0,b^n_0)\rightarrow (u_0,b_0)$ in $\dot{B}^{\frac{d}{p}-1}_{p,1}\times \dot{B}^{\frac{d}{p}}_{p,1}$, then the corresponding existence times $T_n\rightarrow T$, which implies that they have a common lower bound of the lifespan. Finally, we prove that the data-to-solutions map depends continuously on the initial data when $p\leq 2d$. Therefore the non-resistive MHD equation is local well-posedness in the homogeneous Besov space in the Hadamard sense. Our obtained result improves considerably the recent results in \cite{Li1,chemin1,Feffer2}.
\end{abstract}

\noindent \textit{Keywords}: The non-resistive MHD equation, Lifespan, Continuous dependence, Local well-posedness.\\
Mathematics Subject Classification: 35Q53, 35B30, 35B44, 35D10, 76W05.

\tableofcontents
\section{Introduction}
In this paper, we mainly investigate the Cauchy problem of the incompressible non-resistive  magnetohydrodynamic (MHD) equations:
\begin{equation}\label{sp00}
\left\{\begin{array}{lll}
u_t-\Delta u+\nabla P =b\nabla b-u\nabla u,\\
b_t+u\nabla b=b\nabla u ,\\
div~u=div~b=0, \\
(u,b)|_{t=0}:=(u_0,b_0),
\end{array}\right.
\end{equation}
where the unknowns are the vector fields $u=(u_1,u_2,...,u_d),\quad b=(b_1,b_2,...,b_d)$ and the scalar function $P$. Here, $u$ and $b$ are the velocity and magnetic, respectively, while $P$ denotes the pressure. The magnetohydrodynamic equation is a coupled system of the Navier-Stokes equation and Maxwell's equation. This model describes the interactions between the magnetic field and the fluid of moving electrically charged particles such as plasmas, liquid metals, and electrolytes. For more physical background, we refer to \cite{Biskamp,Davidson}.

The MHD equations are of great interest in mathematics and physics. Let's review some well-posedness results about the MHD equations. In the case when there is full magnetic diffusion in system \eqref{sp00}, G. Duvaut and J.-L. Lions, \cite{Duvaut} firstly established the local existence and uniqueness result in the Sobolev spaces. They also prove the global existence of strong solutions with small initial data. M. Sermange, and R. Temam\cite{Sermange} proved the global well-posedness in the Sobolev spaces with $d=2$. For the system \eqref{sp00} with magnetic diffusion, one may refer to the survey paper\cite{Lin} written by F-H. Lin and the references therein
for recent progress in this direction.

 In the physics of plasmas the magnetic diffusion is very small such that it can be neglected. In this case, the study of well-posedness will become more difficult.
We refer to \cite{Abidi,Jiu,Lin-FH,Ren,Pan} about the global existence results with initial data sufficiently close to the equilibrium. The $L^2$ decay rate was studied by R. Agapito and M. Schonbek\cite{Agapito-Schonbek}.
C. Fefferman, D. McCormicket, J. Robinson and J. Rodrigo consider the critical Sobolev space about the well-posedness of the system \eqref{sp00}. They obtained a local existence result in $\mathbb{R}^d,d=2,3$ with the initial data $(u_0, B_0)\in H^s(\mathbb{R}^d)\times H^s(\mathbb{R}^d), s>d/2$ in \cite{Feffer1} and
$(u_0, B_0)\in H^{s-1-\epsilon}(\mathbb{R}^d)\times H^s(\mathbb{R}^d), s>d/2,0<\epsilon <1$ in \cite{Feffer2}. J. Chemin, D. McCormicket, J. Robinson and J. Rodrigo \cite{chemin1}
improved Fefferman et al.'s results to the inhomogenous Besov space with the initial data
$(u_0, B_0)\in B^{\frac{d}{2}-1}_{2,1}(\mathbb{R}^d)\times B^{\frac{d}{2}}_{2,1}(\mathbb{R}^d), (d=2,3)$ and also proved the uniqueness with $d=3$. R. Wan in \cite{Wan} obtained the uniqueness with $d=2$. Recently, J. Li, W, Tan and Z. Yin in \cite{Li1} obtained the existence and uniqueness of solutions to (\ref{sp00}) with the initial data
$(u_0, B_0)\in \dot{B}^{\frac{d}{p}-1}_{p,1}(\mathbb{R}^d)\times \dot{B}^{\frac{d}{p}}_{p,1}(\mathbb{R}^d)$ $(1\leq p\leq 2d)$.

However, whether or not the solution for the non-resistive
MHD equations is local well-posedness (local existence, uniqueness and continuous dependence of
the solution) in homogeneous Besov spaces is an open problem which was proposed by Chemin et al. in \cite{chemin1}. In \cite{Li1}, J. Li, W, Tan and Z. Yin proved the local existence and uniqueness of solutions to (\ref{sp00}) in
$(u_0, B_0)\in \dot{B}^{\frac{d}{p}-1}_{p,1}(\mathbb{R}^d)\times \dot{B}^{\frac{d}{p}}_{p,1}(\mathbb{R}^d)$ $(1\leq p\leq 2d)$. But the continuous dependence of the solution for the Cauchy problem of the non-resistive MHD equations in homogeneous Besov spaces has not been proved yet. In the paper, our aim
is to solve this open problem by establishing the local well-posedness for the Cauchy problem (\ref{sp00}) in homogeneous Besov spaces. Meanwhile, we generalized the local existence's index from $1\leq p\leq 2d$\cite{Li1} to $1\leq p<\infty$.

For convenience, we transform the system (1.1) into an equivalent form of compressible type. By using $divu = divB = 0$, we have
$$u\nabla u = div(u\otimes u),\quad B\nabla B = div(B\otimes B),\quad B\nabla u = div(u\otimes B).$$
Therefore, the system (\ref{sp0}) is formally equivalent to the following equations
\begin{equation}\label{sp0}
\left\{\begin{array}{lll}
u_t-\Delta u =\mathbb{P}(b\nabla b-u\nabla u),\\
b_t+u\nabla b=b\nabla u ,\\
(u,b)|_{t=0}:=(u_0,b_0),
\end{array}\right.
\end{equation}
where $\mathbb{P}=I+\nabla (-\Delta)^{-1}div$ is the Leray project operator, and the initial data is divergence free $div~u_0=div~b_0=0$.

To solve (\ref{sp0}), the main difficulty is that the system is only partially parabolic, owing to the magnetic equation which is of hyperbolic type. This means it's hard to get the concrete expression for the lifespan $T$ (especially the lower bound of $T$), which creates the main difficulty for proving the continuous dependence. Therefore, we would like to present a general functional framework to deal with the local existence
of the solution of (\ref{sp0}) in the homogeneous Besov spaces. By obtaining the expression of the lifespan, we get the uniformly lower bound of the lifespan $T$ by a constructive way (see the key Lemma \ref{gj} below). Finally, we use the  frequency decomposition (see Theorem \ref{continuous dependence} below) to get the continuous dependence.\\
Our main theorem can be stated as follows.
\begin{theo}\label{theorem}
Let $(u_0,b_0)\in \dot{B}^{\frac{d}{p}-1}_{p,1}(\mathbb{R}^d)\times \dot{B}^{\frac{d}{p}}_{p,1}(\mathbb{R}^d) $ with $d\geq 2$. Then there exists a positive time $T$ such that\\
(1) Local existence: if $p\in [1,\infty)$, then the system (\ref{sp0}) has a local solution $(u,b)$ in $E^p_T$ with
$$E^p_T:= C([0,T];\dot{B}^{\frac{d}{p}-1}_{p,1}(\mathbb{R}^d))\cap L^1([0,T];\dot{B}^{\frac{d}{p}+1}_{p,1}(\mathbb{R}^d))\times C([0,T];\dot{B}^{\frac{d}{p}}_{p,1}(\mathbb{R}^d)).$$
(2) Uniqueness: if $p\in [1,2d]$, then the solution of (\ref{sp0}) is unique.\\
(3) Continuous dependence: if $p\in [1,2d]$, then the solution depends continuously on the initial data in $E^p_T$.
\end{theo}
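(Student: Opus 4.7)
My plan for part (1) is to build approximate solutions by frequency truncation of the initial data, setting $(u_0^n, b_0^n) := (\dot S_n u_0, \dot S_n b_0)$ and solving \eqref{sp0} via a Friedrichs-type fixed point in the Banach space underlying $E^p_T$. The velocity equation is parabolic, so heat-kernel maximal regularity immediately gives $u^n \in L^\infty_T(\dot B^{d/p-1}_{p,1}) \cap L^1_T(\dot B^{d/p+1}_{p,1})$. For the magnetic equation, which is transport-plus-stretching, I would apply $\dot\Delta_j$, use the standard Vishik/Danchin commutator estimate, and exploit the embedding $\dot B^{d/p+1}_{p,1}\hookrightarrow \mathrm{Lip}$ applied to $u^n$ to close the $L^\infty_T(\dot B^{d/p}_{p,1})$-bound for $b^n$. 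Unique solvability at each $n$ then follows from a contraction argument.

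\textbf{A uniform lower bound for the lifespan.} The time $T^n$ afforded by the fixed point depends a priori on $(u_0^n, b_0^n)$ and could in principle shrink to zero. The key step, corresponding to Lemma \ref{gj}, is to bootstrap the energy estimate: fix a threshold $M$ comparable to the $\dot B^{d/p-1}_{p,1}\times \dot B^{d/p}_{p,1}$-norm of $(u_0, b_0)$ and show that the norm of $(u^n, b^n)$ in $E^p_T$ stays below $M$ on a common time interval $[0, T]$, using only that the Littlewood--Paley tail $\|(I - \dot S_N)(\dot S_n u_0, \dot S_n b_0)\|$ is uniformly small in $n$. A compactness/limit argument in the critical Besov topology then yields the solution $(u, b)\in E^p_T$ claimed in part (1).

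\textbf{Uniqueness.} For part (2), assuming $p\in[1,2d]$, I would estimate the difference $(\delta u,\delta b)$ of two solutions one regularity below, i.e., in $\dot B^{d/p-1}_{p,1}\times \dot B^{d/p-1}_{p,1}$. The restriction $p\leq 2d$ is exactly what ensures the product law $\dot B^{d/p}_{p,1}\cdot \dot B^{d/p-1}_{p,1}\hookrightarrow \dot B^{d/p-1}_{p,1}$ (equivalent to $d/p-1\geq -d/p$), so that the nonlinear terms $\delta b\otimes b$, $u\otimes \delta b$, etc., are placed in the correct space. A Gronwall argument on the combined quantity then forces $(\delta u,\delta b)\equiv 0$.

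\textbf{Continuous dependence (the main obstacle).} Because the Besov summability index is $1$, the uniqueness estimate only yields convergence in $\dot B^{d/p-1}_{p,1}\times \dot B^{d/p-1}_{p,1}$, which is strictly weaker than $E^p_T$, so I cannot simply take a limit. To upgrade to convergence in the original space, I would follow the frequency-decomposition strategy alluded to in Theorem \ref{continuous dependence}: for each $N$ write
\begin{equation*}
b^n - b = (I-\dot S_N)(b^n-b) + \dot S_N(b^n-b),
\end{equation*}
and similarly for $u^n-u$. The high-frequency part is controlled by $\|(I-\dot S_N)(u_0^n-u_0, b_0^n-b_0)\|$ plus the uniform energy bound from the lifespan step, and becomes uniformly small in $n$ as $N\to\infty$ thanks to the equicontinuity of the Besov tails of the convergent sequence $\{(u_0^n,b_0^n)\}$. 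The low-frequency part, being smooth, is controlled using the stability estimate from the uniqueness proof and therefore tends to zero as $n\to\infty$ for each fixed $N$. Choosing $N$ large first and then $n$ large gives the continuous dependence and completes part (3).
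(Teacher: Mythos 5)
Your outline for parts (1) and (2) matches the paper's route: frequency-truncated data $(\dot S_n u_0,\dot S_n b_0)$, heat maximal regularity for $u$, transport estimates for $b$, a lifespan determined by the smallness of a Littlewood--Paley tail of $u_0$ (this is exactly what the paper's conditions \eqref{lsp2} and \eqref{lsp5.5} encode, and Lemma \ref{gj} upgrades it to convergence of the lifespans $T^n\to T$), and a difference estimate one derivative below with the product law forced by $p\le 2d$. Up to that point the proposal is sound.

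The gap is in part (3). You propose to split the \emph{solution difference} as $b^n-b=(I-\dot S_N)(b^n-b)+\dot S_N(b^n-b)$ and to control the high-frequency piece by the high-frequency tail of the initial data plus the uniform energy bound. This step fails for the magnetic field: $b$ solves a transport--stretching equation, and the flow of $u^n$ transfers energy from low to high frequencies, so $\|(I-\dot S_N)b^n(t)\|_{\dot B^{d/p}_{p,1}}$ is \emph{not} controlled by $\|(I-\dot S_N)b^n_0\|_{\dot B^{d/p}_{p,1}}$; uniform boundedness of $b^n$ in $C_T(\dot B^{d/p}_{p,1})$ gives no uniform-in-$n$ decay of the tails of the solutions, which is precisely the equicontinuity you would need and which is the heart of the difficulty (the same obstruction then propagates to $u^n$ through the source term $b^n\nabla b^n$). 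The paper circumvents this by truncating the \emph{data} rather than the solution: it introduces the solutions $(u^n_j,b^n_j)$ of the full nonlinear system with data $(\dot S_j u^n_0,\dot S_j b^n_0)$, writes $u^n-u^\infty=(u^n-u^n_j)+(u^n_j-u^\infty_j)+(u^\infty_j-u^\infty)$, controls the middle term by a stability estimate for the smoother truncated-data solutions, and controls $u^n-u^n_j$ and $b^n-b^n_j$ uniformly in $n$ by first working in the weaker norm $\dot B^{d/p-1}_{p,\infty}$ with a logarithmic interpolation inequality and the Osgood lemma, and then performing a further splitting $b^n_j=w^n_j+z^n_j$ (with a second truncation parameter $k$ on the forcing) to recover the strong $\dot B^{d/p}_{p,1}$ convergence of the magnetic part. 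Without some substitute for this machinery, your high-frequency claim is an unproved assertion equivalent to the conclusion you are trying to reach.
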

\begin{rema}\label{Besov}
Comparing to \cite{Li1}, we generalized the local existence's index from $p\in [1,2d]$ to $p\in [1,\infty)$ and prove the continuous dependence with $p\in [1,2d]$ in homogeneous Besov spaces.
\end{rema}

The remainder of the paper is organized as follows. In Section 2 we introduce some useful preliminaries. In Section 3, we prove the local existence and the uniqueness of the solution to (\ref{sp0}) with the expression of local time being given. In Section 4, we firstly prove that if the initial data $(u^n_0, B^n_0)$ tends to $(u_0, B_0)$ in $ B^{\frac{d}{p}-1}_{p,1}\times B^{\frac{d}{p}}_{p,1}$, then their local existence times $T_n\rightarrow T$, which implies that they have the common
 existence time $T-\delta$ ($\delta$ is small enough). Then we use the method of frequency decomposition to obtain the continuous dependence.\\
\quad\\
\textbf{Notations: } Throughout, we donate $\dot{B}^{s}_{p,r}(\mathbb{R}^d))=\dot{B}^{s}_{p,r}$, $\|u\|_{\dot{B}^{s}_{p,r}(\mathbb{R}^d)}+\|v\|_{\dot{B}^{s}_{p,r}(\mathbb{R}^d)}=\|u,v\|_{\dot{B}^{s}_{p,r}}$ and $C([0,T];\dot{B}^{s}_{p,r}(\mathbb{R}^d))=C_T(\dot{B}^{s}_{p,r})$, $L^p([0,T];\dot{B}^{s}_{p,r}(\mathbb{R}^d))=L^p_T(\dot{B}^{s}_{p,r})$. For convenience, we donate $C_{E_0}\approx C(1+E_0+e^{E_0})$ for $C$ large enough.

\section{Preliminaries}
\par
In this section, we will recall some propositons and lemmas on the Littlewood-Paley decomposition and Besov spaces.

\begin{prop}\cite{book}
Let $\mathcal{C}$ be the annulus $\{\xi\in\mathbb{R}^d:\frac 3 4\leq|\xi|\leq\frac 8 3\}$. There exist radial functions $\chi$ and $\varphi$, valued in the interval $[0,1]$, belonging respectively to $\mathcal{D}(B(0,\frac 4 3))$ and $\mathcal{D}(\mathcal{C})$, and such that
$$ \forall\xi\in\mathbb{R}^d,\ \chi(\xi)+\sum_{j\geq 0}\varphi(2^{-j}\xi)=1, $$
$$ \forall\xi\in\mathbb{R}^d\backslash\{0\},\ \sum_{j\in\mathbb{Z}}\varphi(2^{-j}\xi)=1, $$
$$ |j-j'|\geq 2\Rightarrow\mathrm{Supp}\ \varphi(2^{-j}\cdot)\cap \mathrm{Supp}\ \varphi(2^{-j'}\cdot)=\emptyset, $$
$$ j\geq 1\Rightarrow\mathrm{Supp}\ \chi(\cdot)\cap \mathrm{Supp}\ \varphi(2^{-j}\cdot)=\emptyset. $$
The set $\widetilde{\mathcal{C}}=B(0,\frac 2 3)+\mathcal{C}$ is an annulus, and we have
$$ |j-j'|\geq 5\Rightarrow 2^{j}\mathcal{C}\cap 2^{j'}\widetilde{\mathcal{C}}=\emptyset. $$
Further, we have
$$ \forall\xi\in\mathbb{R}^d,\ \frac 1 2\leq\chi^2(\xi)+\sum_{j\geq 0}\varphi^2(2^{-j}\xi)\leq 1, $$
$$ \forall\xi\in\mathbb{R}^d\backslash\{0\},\ \frac 1 2\leq\sum_{j\in\mathbb{Z}}\varphi^2(2^{-j}\xi)\leq 1. $$
\end{prop}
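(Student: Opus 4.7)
The plan is to construct $\chi$ and $\varphi$ from a single radial bump via a telescoping dyadic construction. First, I would pick a radial function $\chi\in C_c^\infty(\mathbb{R}^d)$, non-increasing along rays, valued in $[0,1]$, with $\chi\equiv 1$ on $B(0,3/4)$ and $\mathrm{supp}\,\chi\subset B(0,4/3)$; such a $\chi$ is obtained by composing $|\cdot|$ with a standard smooth scalar bump on $[0,\infty)$. Then set $\varphi(\xi):=\chi(\xi/2)-\chi(\xi)$. By the monotonicity of $\chi$, $\varphi$ is radial and valued in $[0,1]$, and its support lies in $\{3/4\leq|\xi|\leq 8/3\}=\mathcal{C}$, since for $|\xi|<3/4$ one has $\chi(\xi)=\chi(\xi/2)=1$ and for $|\xi|>8/3$ both vanish.

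The two partition-of-unity identities come from telescoping. Writing $a_k:=\chi(2^{-k}\xi)$, one has $\varphi(2^{-j}\xi)=a_{j+1}-a_j$, whence
$$\chi(\xi)+\sum_{j=0}^{N}\varphi(2^{-j}\xi)=\chi(2^{-N-1}\xi)\longrightarrow\chi(0)=1$$
as $N\to\infty$, for every $\xi\in\mathbb{R}^d$; the left-hand side is actually finite for each fixed $\xi$ by the support property of $\varphi$. Similarly, for $\xi\neq 0$, $\sum_{j=-M}^{N}\varphi(2^{-j}\xi)=a_{N+1}-a_{-M}\to 1-0=1$, using that $|2^M\xi|\to\infty$ leaves the compact support of $\chi$.

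The three support-disjointness statements reduce to arithmetic on the dyadic scaling of $\mathcal{C}$ and $\widetilde{\mathcal{C}}$. Because the ratio of outer to inner radius of $\mathcal{C}$ is $(8/3)/(3/4)=32/9<4$, the annuli $2^j\mathcal{C}$ and $2^{j'}\mathcal{C}$ are disjoint as soon as $|j-j'|\geq 2$. For $j\geq 1$, $\mathrm{supp}\,\chi\subset\{|\xi|\leq 4/3\}$ while $\mathrm{supp}\,\varphi(2^{-j}\cdot)\subset\{|\xi|\geq 2^j\cdot 3/4\geq 3/2\}$. The $|j-j'|\geq 5$ case follows from estimating the inner radius of $\widetilde{\mathcal{C}}$ as $3/4-2/3=1/12$, so that at scale $j'=j+5$ the inner radius of $2^{j'}\widetilde{\mathcal{C}}$ matches the outer radius of $2^j\mathcal{C}$, giving the required separation under the convention that the supports are considered as open annuli.

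For the sum-of-squares inequalities, the upper bound follows from $a^2\leq a$ for $a\in[0,1]$ combined with the partition identities already established. For the lower bound $\geq 1/2$, the key observation is that by the $|j-j'|\geq 2$ disjointness together with the fact that $\mathrm{supp}\,\chi$ and $\mathrm{supp}\,\varphi(2^{-j}\cdot)$ are disjoint for $j\geq 1$, \emph{at most two} of the terms $\{\chi(\xi),\varphi(2^{-j}\xi)\}_{j\geq 0}$ are simultaneously nonzero at any $\xi$. Cauchy--Schwarz applied to those at most two non-negative numbers (summing to $1$) yields $\chi^2(\xi)+\sum_{j\geq 0}\varphi^2(2^{-j}\xi)\geq 1/2$; the same argument handles the two-sided sum. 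The only mildly subtle point is arranging $\chi$ to be radially non-increasing so as to force $\varphi\geq 0$; everything else is classical bookkeeping in the Bahouri--Chemin--Danchin framework cited as \cite{book}, and I anticipate no substantive obstacle.
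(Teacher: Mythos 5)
Your construction is correct and is precisely the standard one: the paper does not prove this proposition at all (it is quoted verbatim from \cite{book}), and your telescoping definition $\varphi(\xi)=\chi(\xi/2)-\chi(\xi)$ with a radially non-increasing $\chi$, the dyadic radius bookkeeping, and the ``at most two overlapping terms plus Cauchy--Schwarz'' argument for the lower bound $\tfrac12$ reproduce the proof given in Bahouri--Chemin--Danchin. The only delicate point, the borderline case $|j-j'|=5$ where the outer radius $2^{j}\cdot\tfrac83$ of $2^{j}\mathcal{C}$ equals the inner radius $2^{j+5}\cdot\tfrac1{12}$ of $2^{j+5}\widetilde{\mathcal{C}}$, is correctly resolved by your observation that $\widetilde{\mathcal{C}}=B(0,\tfrac23)+\mathcal{C}$ is open (the infimum $\tfrac1{12}$ is not attained), so no gap remains.
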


\begin{defi}\cite{book}
Denote $\mathcal{F}$ by the Fourier transform and $\mathcal{F}^{-1}$ by its inverse.
Let $u$ be a tempered distribution in $\mathcal{S}'(\mathbb{R}^d)$. For all $j\in\mathbb{Z}$, define
$$
\Delta_j u=0\,\ \text{if}\,\ j\leq -2,\quad
\Delta_{-1} u=\mathcal{F}^{-1}(\chi\mathcal{F}u),\quad
\Delta_j u=\mathcal{F}^{-1}(\varphi(2^{-j}\cdot)\mathcal{F}u)\,\ \text{if}\,\ j\geq 0,\quad
S_j u=\sum_{j'<j}\Delta_{j'}u.
$$
Then the Littlewood-Paley decomposition is given as follows:
$$ u=\sum_{j\in\mathbb{Z}}\Delta_j u \quad \text{in}\ \mathcal{S}'(\mathbb{R}^d). $$

Let $s\in\mathbb{R},\ 1\leq p,r\leq\infty.$ The nonhomogeneous Besov space $B^s_{p,r}(\mathbb{R}^d)$ is defined by
$$ B^s_{p,r}=B^s_{p,r}(\mathbb{R}^d)=\{u\in S'(\mathbb{R}^d):\|u\|_{B^s_{p,r}(\mathbb{R}^d)}=\Big\|(2^{js}\|\Delta_j u\|_{L^p})_j \Big\|_{l^r(\mathbb{Z})}<\infty\}. $$

Similarly, we can define the homogeneous Besov space.
$$\dot{B}^{s}_{p,r}=\dot{B}^{s}_{p,r}(\mathbb{R}^d):=\{u\in S'_h(\mathbb{R}^d) | \|u\|_{\dot{B}^{s}_{p,r}}:=\|2^{sj}\|\dot{\Delta}_ju\|_{L^p(\mathbb{S}^d)}\|_{l^r}\leq\infty\},$$
where the Littlewood-Paley operator $\dot{\Delta}_j$ is defined by
$$\dot{\Delta}_j u=\mathcal{F}^{-1}(\varphi(2^{-j}\cdot)\mathcal{F}u)\,\ \text{if}\,\ j\in\mathbb{Z}.$$
\end{defi}

\begin{lemm}\label{compact}
Let $s\in (-\frac{d}{p'},\frac{d}{p}]$ ($s=\frac{d}{p},r=1$). Assume $f^n$ is uniformly bounded in $\dot{B}^s_{p,r}\cap\dot{B}^{-\delta}_{\infty,\infty}(\forall\delta>0)$ or $\dot{B}^s_{p,r}\cap L^{\infty}$ . Then $\varphi f^n$ is bound in $\dot{B}^{s}_{p,r}\cap\dot{B}^{s-\epsilon_1}_{p,r}$ $(0<\epsilon_1<s+\frac{d}{p'})$, and the map $f^n\mapsto \varphi f^n$ is compact in $\dot{B}^{s-\epsilon}_{p,r}$ $(0<\epsilon<\epsilon_1)$, where $\varphi\in S(\mathbb{R}^d)$.
\end{lemm}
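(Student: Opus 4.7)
My plan is to first establish the two boundedness statements via Bony's paraproduct decomposition, and then derive the compactness by a dyadic-frequency truncation argument combined with a Fr\'echet--Kolmogorov extraction at each fixed frequency.

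For boundedness in $\dot{B}^s_{p,r}$, the delicate point in the homogeneous setting is that multiplication by a Schwartz function is not automatically bounded on $\dot{B}^s_{p,r}$, because the low-frequency values of $f^n$ are not controlled by the $\dot{B}^s_{p,r}$ norm alone. Writing
\[
\varphi f^n=T_\varphi f^n+T_{f^n}\varphi+R(\varphi,f^n),
\]
the first term is handled by $\|\varphi\|_{L^\infty}$ together with $\|f^n\|_{\dot{B}^s_{p,r}}$; the second and the remainder, whose estimates require control at low frequencies of $f^n$, are precisely where the auxiliary hypothesis $f^n\in L^\infty$ (or $f^n\in\dot{B}^{-\delta}_{\infty,\infty}$ for all $\delta>0$) enters, combined with the fact that $\varphi$ lies in all Besov spaces of positive regularity. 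The range $s\in(-d/p',d/p]$ with $r=1$ at the endpoint $s=d/p$ is the natural range in which all three estimates close.

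The gain from $\dot{B}^s_{p,r}$ down to $\dot{B}^{s-\epsilon_1}_{p,r}$ is bought purely at low frequencies. For $j\le 0$, a Young-type estimate applied to $\dot\Delta_j=2^{jd}h(2^j\cdot)*$ yields
\[
\|\dot\Delta_j(\varphi f^n)\|_{L^p}\le C\,2^{jd/p'}\|\varphi f^n\|_{L^1},
\]
and $\|\varphi f^n\|_{L^1}$ is uniformly bounded by the Schwartz decay of $\varphi$ against the $L^\infty$-type control of $f^n$. Summing $2^{j(s-\epsilon_1)}$ against this estimate over $j\le 0$ is convergent exactly when $s-\epsilon_1+d/p'>0$, which gives the stated restriction $\epsilon_1<s+d/p'$; for $j>0$, the $\dot{B}^s_{p,r}$ bound already suffices.

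For the compactness in $\dot{B}^{s-\epsilon}_{p,r}$ with $0<\epsilon<\epsilon_1$, I would run the truncation-plus-Fr\'echet--Kolmogorov scheme. Given $\eta>0$, choose $N$ so large that the high-frequency tail is dominated by $2^{-N\epsilon}\|\varphi f^n\|_{\dot{B}^s_{p,r}}<\eta$ and the low-frequency tail by $2^{-N(\epsilon_1-\epsilon)}\|\varphi f^n\|_{\dot{B}^{s-\epsilon_1}_{p,r}}<\eta$, uniformly in $n$. For the middle truncation $\sum_{|j|\le N}\dot\Delta_j(\varphi f^n)$, apply Fr\'echet--Kolmogorov in $L^p$ block by block: each block is bounded in $L^p$, and splitting the convolution defining $\dot\Delta_j(\varphi f^n)(x)$ into a near and a far region uses the Schwartz decay of $\varphi$ against the controlled growth of $f^n$ to produce uniform spatial decay as $|x|\to\infty$, while a derivative bound on the Littlewood--Paley kernel yields $L^p$-equicontinuity of translations. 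Diagonal extraction then delivers a subsequence convergent in $\dot{B}^{s-\epsilon}_{p,r}$. I expect the main obstacle to be exactly this spatial-decay step: since $\varphi f^n$ is not compactly supported, one must quantify how the Schwartz tails of $\varphi$ beat the polynomial-type growth of $f^n$ uniformly in $n$, while keeping the $j$-dependence in the middle blocks under control.
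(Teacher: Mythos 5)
Your proposal is correct and follows essentially the same route as the paper, which gives no argument of its own but simply defers to Theorems 2.93--2.94 of Bahouri--Chemin--Danchin; the proofs of those theorems are exactly your combination of Bony decomposition for the boundedness, the low-frequency Young-type bound $\|\dot\Delta_j(\varphi f^n)\|_{L^p}\lesssim 2^{jd/p'}\|\varphi f^n\|_{L^1}$ (which correctly reproduces the constraint $\epsilon_1<s+\tfrac{d}{p'}$), and frequency truncation plus a Fr\'echet--Kolmogorov/Ascoli extraction for the compactness. The only detail worth tightening in a full write-up is that in the $\dot{B}^{-\delta}_{\infty,\infty}$ case the uniform bound on $\|\varphi f^n\|_{L^1}$ is not an immediate $L^\infty$ bound but follows by splitting $f^n$ into low frequencies (controlled in $L^\infty$ by the $\dot{B}^{-\delta}_{\infty,\infty}$ norm) and high frequencies (placed in some $L^q$ via Bernstein using $s>-\tfrac{d}{p'}$).
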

\begin{proof}
The proof is based on Theorems 2.93-2.94 in \cite{book}, we omit it here.
\end{proof}

\begin{lemm}\label{zhibiao}\cite{miao}
Let $s_1,s_2\leq \frac{d}{p}$ and $s_1+s_2>d\max\{0,\frac{2}{p}-1\}$. Assume $f\in\dot{B}^{s_1}_{p,1}$ and $g\in\dot{B}^{s_2}_{p,1}$. Then there holds
$$\|fg\|_{\dot{B}^{s_1+s_2-\frac{d}{p}}_{p,\infty}}\leq C\|f\|_{\dot{B}^{s_1}_{p,\infty}}\|g\|_{\dot{B}^{s_2}_{p,1}}.$$
\end{lemm}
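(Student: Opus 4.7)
The plan is to apply Bony's paraproduct decomposition
$$ fg = T_f g + T_g f + R(f,g), $$
with $T_f g := \sum_{j} S_{j-1}f\,\dot{\Delta}_j g$ and $R(f,g) := \sum_{|j-j'|\leq 1} \dot{\Delta}_j f\,\dot{\Delta}_{j'} g$, and to estimate each piece in $\dot{B}^{s_1+s_2-d/p}_{p,\infty}$ separately. The asymmetry in the right-hand-side norms (with $f$ measured in the weaker $\dot{B}^{s_1}_{p,\infty}$ while $g$ sits in the stronger $\dot{B}^{s_2}_{p,1}$) is exactly what is needed so that one can pull an $\ell^\infty$ supremum in the dyadic index out of the $f$-factor while leaving an $\ell^1$ sum on the $g$-factor when bounding the paraproducts.

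For $T_f g$, the spectral localisation of $S_{j-1}f\,\dot{\Delta}_j g$ in a dyadic annulus of size $\sim 2^j$ gives $\dot{\Delta}_k(T_f g) = \sum_{|j-k|\leq 4}\dot{\Delta}_k(S_{j-1}f\,\dot{\Delta}_j g)$. Using Bernstein's inequality together with $s_1 \leq d/p$,
$$ \|S_{j-1}f\|_{L^\infty} \lesssim \sum_{j'\leq j-2} 2^{j'd/p}\|\dot{\Delta}_{j'}f\|_{L^p} \lesssim 2^{j(d/p-s_1)}\|f\|_{\dot{B}^{s_1}_{p,\infty}}. $$
Multiplying by $\|\dot{\Delta}_j g\|_{L^p}$, summing over the $\mathcal O(1)$ relevant $j$'s, and weighting by $2^{k(s_1+s_2-d/p)}$ produces a bound uniform in $k$ controlled by $\|f\|_{\dot{B}^{s_1}_{p,\infty}}\|g\|_{\dot{B}^{s_2}_{p,1}}$. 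The symmetric term $T_g f$ is handled by interchanging the roles of $f$ and $g$, using $s_2 \leq d/p$ to bound $\|S_{j-1}g\|_{L^\infty}$, and noticing that the remaining factor $\sup_k 2^{ks_1}\|\dot{\Delta}_k f\|_{L^p}$ is precisely $\|f\|_{\dot{B}^{s_1}_{p,\infty}}$.

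The main obstacle is the remainder $R(f,g)$, whose Fourier support forces a high--high-to-low summation: $\dot{\Delta}_k R(f,g)$ only sees blocks with $j\geq k-N_0$, so there is no automatic gain from low-to-high as for $T_f g$. H\"older (in the worst integrability case) followed by Bernstein to pass from $L^{p/2}$ to $L^p$ when $p>2$ produces
$$ \|\dot{\Delta}_k R(f,g)\|_{L^p} \lesssim 2^{kd\max(0,\,2/p-1)}\sum_{j\geq k-N_0}\|\dot{\Delta}_j f\|_{L^p}\|\dot{\Delta}_j g\|_{L^p}. $$
Multiplying by $2^{k(s_1+s_2-d/p)}$ and rewriting each dyadic factor in terms of its Besov norm yields a geometric series in $j-k$ with common ratio $2^{-(s_1+s_2-d\max(0,2/p-1))}$; this converges \emph{precisely} under the hypothesis $s_1+s_2>d\max\{0,2/p-1\}$. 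Taking $\ell^\infty$ in $k$ and adding the three contributions closes the estimate. The delicate point is the regime $p>2$, where the Bernstein correction $2^{kd(2/p-1)}$ appears and must be absorbed against the scaling exponent $s_1+s_2-d/p$; this is exactly the borderline condition imposed by the hypothesis, and is also the reason the conclusion lives in the weaker $\ell^\infty$ Besov space rather than $\dot{B}^{s_1+s_2-d/p}_{p,1}$.
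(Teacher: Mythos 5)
The paper offers no proof of this lemma: it is quoted verbatim from \cite{miao}, so there is no in-paper argument to compare against. Your architecture --- Bony's decomposition, with the $\ell^\infty$ summability carried by $f$ through the paraproducts and the hypothesis $s_1+s_2>d\max\{0,\tfrac{2}{p}-1\}$ reserved for summing the high--high remainder --- is the standard and correct one for this kind of hybrid product law. But two of your quantitative steps do not hold as written. For $T_fg$, the chain $\|S_{j-1}f\|_{L^\infty}\lesssim\sum_{j'\le j-2}2^{j'd/p}\|\dot\Delta_{j'}f\|_{L^p}\lesssim 2^{j(d/p-s_1)}\|f\|_{\dot{B}^{s_1}_{p,\infty}}$ requires $s_1<d/p$ strictly: writing $2^{j'd/p}\|\dot\Delta_{j'}f\|_{L^p}=2^{j'(d/p-s_1)}\cdot 2^{j's_1}\|\dot\Delta_{j'}f\|_{L^p}$ and pulling out the $\ell^\infty$ norm leaves the geometric sum $\sum_{j'\le j-2}2^{j'(d/p-s_1)}$, which diverges at $s_1=d/p$ --- an endpoint the lemma explicitly permits. (The symmetric term $T_gf$ is safe at $s_2=d/p$ precisely because $g$ carries the $\ell^1$ norm, so one takes the supremum of the weights and sums the blocks.) At $s_1=d/p$ you would have to invoke $f\in\dot{B}^{d/p}_{p,1}$, which is not the norm appearing on the right-hand side of the conclusion, so this case is genuinely left open by your argument.

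The second problem is the remainder display. H\"older followed by Bernstein does not produce the factor $2^{kd\max(0,2/p-1)}$. For $p\ge 2$ one has $\|\dot\Delta_kR(f,g)\|_{L^p}\lesssim 2^{kd/p}\sum_{j\ge k-N_0}\|\dot\Delta_jf\|_{L^p}\|\dot\Delta_jg\|_{L^p}$, the gain $2^{kd(2/p-1/p)}=2^{kd/p}$ coming from Bernstein on a ball of radius $\sim 2^k$; this is exactly what cancels the $-d/p$ in the target index and reduces the condition to $s_1+s_2>0$. For $p\le 2$ the space $L^{p/2}$ is unavailable; the usual route is $\|\dot\Delta_jf\,\dot\Delta_{j'}g\|_{L^1}\lesssim 2^{jd(2/p-1)}\|\dot\Delta_jf\|_{L^p}\|\dot\Delta_{j'}g\|_{L^p}$ followed by Bernstein $L^1\to L^p$ costing $2^{kd/p'}$, so that the loss sits at the high frequency $j$ and the gain at the output frequency $k$, and only then does the sum collapse to a geometric series in $j-k$ with ratio $2^{-(s_1+s_2-d(2/p-1))}$. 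With your exponents a stray factor $2^{k(d\max(0,2/p-1)-d/p)}$ survives and the series does not close uniformly in $k$. Your final conclusion is the right one, but the bookkeeping in both regimes needs to be redone along these lines before the proof is complete.
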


\begin{defi}\cite{book}
Let $s\in\mathbb{R},1\leq p,q,r\leq\infty$ and $T\in (0,\infty].$ The functional space $\widetilde{L}^q_T(\dot{B}^{s}_{p,r})$ is defined as the set of all the distributions $f(t)$ satisfying
$\|f\|_{\widetilde{L}^q_T(\dot{B}^{s}_{p,r})}:=\|(2^{ks}\|\dot{\Delta}_kf(t)\|_{L^q_TL^p})_k\|_{l^r}<\infty .$
\end{defi}
By Minkowski's inequality, it is easy to find that
$$\|f\|_{\widetilde{L}^q_T(\dot{B}^{s}_{p,r})}\leq \|f\|_{L^q_T(\dot{B}^{s}_{p,r})}\quad q\leq r,\quad\quad\quad \|f\|_{\widetilde{L}^q_T(\dot{B}^{s}_{p,r})}\geq \|f\|_{L^q_T(\dot{B}^{s}_{p,r})}\quad q\geq r.$$

Finally, we state some useful results about the heat equation and the transport equation
\begin{equation}\label{s1cuchong}
\left\{\begin{array}{l}
    u_t+\Delta u=G,\ x\in\mathbb{R}^d,\ t>0, \\
    u(0,x)=u_0(x),
\end{array}\right.
\end{equation}
\begin{equation}\label{s1}
\left\{\begin{array}{l}
    f_t+v\cdot\nabla f=g,\ x\in\mathbb{R}^d,\ t>0, \\
    f(0,x)=f_0(x),
\end{array}\right.
\end{equation}
which are crucial to the proof of our main theorem later.

\begin{lemm}\label{heat}\cite{Danchin}
Let $s\in\mathbb{R}, 1\leq q,q_1,p,r\leq\infty$ with $q_1\leq q$. Assume $u_0$ in $\dot{B}^s_{p,r}$, and $G$ in $\widetilde{L}^{q_1}_T(\dot{B}^s_{p,r})$. Then (\ref{s1cuchong}) has a unique solution $u$ in $\widetilde{L}^{q}_T(\dot{B}^{s+\frac{2}{q}}_{p,r})$ satisfying
$$ \|u\|_{\widetilde{L}^{q}_T(\dot{B}^{s+\frac{2}{q}}_{p,r})}\leq C_1\Big(\|u_0\|_{\dot{B}^s_{p,r}}+\|G\|_{\widetilde{L}^{q_1}_T\dot{B}^{s+\frac{2}{q_1}-2}_{p,r}}\Big). $$
In particular, if $q_1=r=1$, by Minkowski's inequality we have
$$ \|u\|_{L^{\infty}_T(\dot{B}^s_{p,1})\cap L^{2}_T(\dot{B}^{s+1}_{p,1})\cap L^{1}_T(\dot{B}^{s+2}_{p,1})}\leq C_1\Big(\|u_0\|_{\dot{B}^s_{p,1}}+\|G\|_{L^{1}_T\dot{B}^{s}_{p,1}}\Big). $$
\end{lemm}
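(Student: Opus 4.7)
The plan is to frequency-localize the equation, solve explicitly by Duhamel's formula, and combine a spectral-localization heat smoothing estimate with Young's convolution inequality in time; this is the standard Chemin--Lerner approach.

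First I would apply $\dot\Delta_k$ to (\ref{s1cuchong}), obtaining $\partial_t(\dot\Delta_k u) - \Delta(\dot\Delta_k u) = \dot\Delta_k G$ with data $\dot\Delta_k u_0$, and write the Duhamel representation
\begin{equation*}
\dot\Delta_k u(t) = e^{t\Delta}\dot\Delta_k u_0 + \int_0^t e^{(t-s)\Delta}\dot\Delta_k G(s)\,ds.
\end{equation*}
The key ingredient is the spectrally localized heat estimate: since $\dot\Delta_k f$ has Fourier support in the annulus $2^k\mathcal{C}$, there exist $c,C>0$ such that
\begin{equation*}
\|e^{t\Delta}\dot\Delta_k f\|_{L^p} \leq C\, e^{-c\,t\,2^{2k}}\|\dot\Delta_k f\|_{L^p}.
\end{equation*}
This follows by writing $e^{t\Delta}\dot\Delta_k f = h_{t,k} * \dot\Delta_k f$, where $h_{t,k}$ has Fourier symbol $\varphi(2^{-k}\xi)e^{-t|\xi|^2}$; the change of variables $\xi = 2^k\eta$ reduces the problem to bounding the $L^1$ norm of a Schwartz function on $|\eta|\sim 1$ multiplied by $e^{-t\,2^{2k}|\eta|^2}$.

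Taking $L^q([0,T])$ norms in time, the homogeneous part is controlled by $C\,2^{-2k/q}\|\dot\Delta_k u_0\|_{L^p}$ thanks to $\|e^{-c\,t\,2^{2k}}\|_{L^q(0,T)}\leq C\,2^{-2k/q}$. For the Duhamel term I would apply Young's convolution inequality in $t$ with exponents satisfying $1 + \tfrac{1}{q} = \tfrac{1}{\tilde q} + \tfrac{1}{q_1}$ (consistent with the hypothesis $q_1\le q$), yielding
\begin{equation*}
\bigl\|\dot\Delta_k u\bigr\|_{L^q_T L^p} \leq C\bigl(2^{-2k/q}\|\dot\Delta_k u_0\|_{L^p} + 2^{-2k(1+1/q - 1/q_1)}\|\dot\Delta_k G\|_{L^{q_1}_T L^p}\bigr).
\end{equation*}
Multiplying by $2^{k(s + 2/q)}$ and taking $\ell^r$ in $k$ immediately produces the desired estimate, since $(s + 2/q) - 2(1 + 1/q - 1/q_1) = s + 2/q_1 - 2$, matching the norm $\|G\|_{\widetilde L^{q_1}_T \dot B^{s+2/q_1 - 2}_{p,r}}$ on the right-hand side.

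The particular case $q_1 = r = 1$ is then immediate: when $r=1$, Minkowski collapses $\widetilde L^1_T(\dot B^s_{p,1})$ onto $L^1_T(\dot B^s_{p,1})$, and choosing $q\in\{\infty,2,1\}$ recovers the three stated norms on the left. The only real technical obstacle is the spectral heat smoothing estimate for $e^{t\Delta}\dot\Delta_k$; once it is established, the rest is a routine application of Duhamel, Young in time, and Littlewood--Paley summation, as in \cite{Danchin,book}.
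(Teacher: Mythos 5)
Your proposal is correct and follows exactly the standard Chemin--Lerner argument (spectral localization, Duhamel, the smoothing estimate $\|e^{t\Delta}\dot\Delta_k f\|_{L^p}\le Ce^{-ct2^{2k}}\|\dot\Delta_k f\|_{L^p}$, Young's inequality in time, then $\ell^r$ summation) that underlies the reference \cite{Danchin} which the paper cites for this lemma without reproducing a proof; the exponent bookkeeping $(s+2/q)-2(1+1/q-1/q_1)=s+2/q_1-2$ and the Minkowski reduction in the case $q_1=r=1$ are both right. Note only that the sign in \eqref{s1cuchong} as printed ($u_t+\Delta u=G$) is a typo for the heat equation $u_t-\Delta u=G$, which you have implicitly and correctly fixed.
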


\begin{lemm}\label{priori estimate}\cite{book}
Let $s\in [\max\{-\frac{d}{p},-\frac{d}{p'}\},\frac{d}{p}+1]$ ($s=1+\frac{1}{p},r=1$; $s=\max\{-\frac{d}{p},-\frac{d}{p'}\},r=\infty$).
There exists a constant $C$ such that for all solutions $f\in L^{\infty}([0,T];B^s_{p,r})$ of \eqref{s1} with initial data $f_0$ in $\dot{B}^s_{p,r}$, and $g$ in $L^1([0,T];\dot{B}^s_{p,r})$, we have, for a.e. $t\in[0,T]$,
$$ \|f(t)\|_{\dot{B}^s_{p,r}}\leq e^{C_2 V(t)}\Big(\|f_0\|_{\dot{B}^s_{p,r}}+\int_0^t e^{-C_2 V(t')}\|g(t')\|_{\dot{B}^s_{p,r}}dt'\Big), $$
where $V'(t)=\|\nabla v\|_{\dot{B}^{\frac{d}{p}}_{p,r}\cap L^{\infty}}$(if $s=1+\frac{1}{p},r=1$, $V'(t)=\|\nabla v\|_{\dot{B}^{\frac{d}{p}}_{p,1}}$).
\end{lemm}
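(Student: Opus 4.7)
The plan is to follow the standard dyadic-energy method for transport equations in Besov spaces: localize the equation via the homogeneous Littlewood--Paley blocks $\dot\Delta_j$, derive an $L^p$ energy inequality for each block modulo a commutator remainder, bound the commutator by paradifferential calculus, and close by Gronwall's inequality.

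First, I would apply $\dot\Delta_j$ to \eqref{s1} to obtain
$$\partial_t \dot\Delta_j f + v\cdot\nabla \dot\Delta_j f = \dot\Delta_j g - R_j, \qquad R_j:=[\dot\Delta_j,\, v\cdot\nabla] f.$$
Multiplying by $|\dot\Delta_j f|^{p-2}\dot\Delta_j f$, integrating over $\mathbb{R}^d$, and integrating the transport term by parts (producing the term $-\tfrac{1}{p}\int(\dive v)\,|\dot\Delta_j f|^p\,dx$), one obtains
$$\tfrac{d}{dt}\|\dot\Delta_j f\|_{L^p}\le \tfrac{1}{p}\|\dive v\|_{L^\infty}\,\|\dot\Delta_j f\|_{L^p} + \|R_j\|_{L^p} + \|\dot\Delta_j g\|_{L^p}.$$
Since $\|\dive v\|_{L^\infty}\le \|\nabla v\|_{L^\infty}\le V'(t)$, the first term on the right is already absorbed into the Gronwall factor.

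The technical core is the commutator estimate
$$\bigl\|\bigl(2^{js}\|R_j\|_{L^p}\bigr)_{j\in\mathbb Z}\bigr\|_{\ell^r} \le C\,\|\nabla v\|_{\dot{B}^{d/p}_{p,r}\cap L^\infty}\,\|f\|_{\dot{B}^{s}_{p,r}},$$
which I would prove by Bony's decomposition $v\cdot\nabla f=T_v\nabla f+T_{\nabla f}v+R(v,\nabla f)$ and then commuting each piece with $\dot\Delta_j$. The paraproduct term $[\dot\Delta_j,T_v]\nabla f$ is handled by a first-order Taylor expansion of the convolution kernel, producing a $2^{-j}$ gain that absorbs the derivative, so that the worst factor is $\|\nabla v\|_{L^\infty}$. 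The co-paraproduct $T_{\nabla f}v$ requires the regularity $\nabla v\in \dot{B}^{d/p}_{p,r}$ and forces the upper bound $s\le d/p+1$, with $r=1$ at the endpoint. The remainder $R(v,\nabla f)$ is where the lower bound $s\ge -d\min\{1/p,1/p'\}$ enters, via the standard positive-regularity requirement for $\ell^1$-summability of the diagonal paraproduct (with $r=\infty$ at that endpoint).

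To conclude, I multiply the $j$-wise differential inequality by $2^{js}$, take the $\ell^r$ norm, integrate in time, and insert the commutator bound to obtain
$$\|f(t)\|_{\dot{B}^{s}_{p,r}}\le \|f_0\|_{\dot{B}^{s}_{p,r}} + \int_0^t\|g(\tau)\|_{\dot{B}^{s}_{p,r}}\,d\tau + C_2\int_0^t V'(\tau)\,\|f(\tau)\|_{\dot{B}^{s}_{p,r}}\,d\tau,$$
and Gronwall's inequality yields the stated estimate with the weight $e^{C_2 V(t)}$. The main obstacle lies in the endpoint behavior of the commutator estimate: at $s=d/p+1$ the co-paraproduct $T_{\nabla f}v$ barely closes, requiring $r=1$, while at $s=\max\{-d/p,-d/p'\}$ the remainder is only summable in $\ell^\infty$, forcing $r=\infty$. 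Tracking these endpoint constraints is precisely what determines the admissible range of $(s,r)$ in the statement; once the commutator bound is in hand, the energy, summation, and Gronwall steps are routine.
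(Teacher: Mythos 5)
Your proposal is correct and follows essentially the same route as the paper, which does not reprove this lemma but cites it from \cite{book}: dyadic localization, the blockwise $L^p$ energy inequality with the $\mathrm{div}\,v$ term, the commutator estimate via Bony's decomposition with exactly the endpoint constraints you identify (forcing $r=1$ at $s=\frac{d}{p}+1$ and $r=\infty$ at $s=\max\{-\frac{d}{p},-\frac{d}{p'}\}$), and Gronwall. Nothing needs to be added.
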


\begin{rema}\cite{book}
If $div~v=0$, we can get the same result with a better indicator: $\max\{-\frac{d}{p},-\frac{d}{p'}\}-1<s<\frac{d}{p}+1$(or $s=\max\{-\frac{d}{p},-\frac{d}{p'}\}-1,r=\infty$).
\end{rema}

\begin{lemm}\label{Besov}
Let $s\in(\max\{1-\frac{d}{p},1-\frac{d}{p'}\},\frac{d}{p}]$ ($s=\frac{d}{p},r=1$), $f_0\in\dot{B}^{s}_{p,r}$, $g_0\in L^1_T(\dot{B}^{s}_{p,r})$ and $\nabla v\in L^1_T(\dot{B}^{\frac{d}{p}}_{p,1})$. If $v(t,x)$ satisfies one of the following conditions $(\rho>1)$:\\
1) when $s>1$, $v\in L^{\rho}_T(L^{\infty}\cap\dot{B}^{\frac{d}{p}}_{p,\infty})$;\\
2) when $s=1$, $v\in L^{\rho}_T(L^{\infty}\cap\dot{B}^{\frac{d}{p}}_{p,r'})$; \\
3) when $s<1$ and $1\leq p\leq 2$, $v\in L^{\rho}_T(L^{\infty}\cap\dot{B}^{\frac{d}{p}}_{p,\infty}\cap\dot{B}^{\frac{d}{p'}}_{p',r'})$; \\
4) when $s<1$ and $p\geq 2$, $v\in L^{\rho}_T(L^{\infty}\cap\dot{B}^{\frac{d}{p}}_{p,r'}) $. \\
Then (\ref{s1}) has a unique solution $f\in C_T(\dot{B}^{s}_{p,1})$ with $r<\infty$ ($f\in C_{Tw}(\dot{B}^{s}_{p,\infty})$ with $r=\infty$).
\end{lemm}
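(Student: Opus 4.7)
The a priori estimate is already supplied by Lemma~\ref{priori estimate}; what remains is to construct a solution, to verify uniqueness, and to upgrade $L^{\infty}_{T}$ boundedness to $C_{T}$ continuity. I would carry this out by a Friedrichs approximation of the data.

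For uniqueness, take two solutions $f_{1},f_{2}$ and set $\delta f=f_{1}-f_{2}$; it solves the homogeneous transport equation with zero data. I apply Lemma~\ref{priori estimate} at the shifted index $s-1$: the hypothesis $s>\max\{1-\tfrac{d}{p},1-\tfrac{d}{p'}\}$ places $s-1$ strictly above the lower threshold in the divergence-free version of that lemma (the remark that follows it), while $\nabla v\in L^{1}_{T}(\dot B^{d/p}_{p,1})$ makes $V\in L^{1}_{T}$; Gronwall then forces $\delta f\equiv 0$.

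For existence, I would set $f_{0}^{n}:=\dot S_{n}f_{0}$ and $g^{n}:=\dot S_{n}g$, which converge to $f_{0}$ and $g$ in the relevant Besov norms when $r<\infty$. Because $\nabla v\in L^{1}_{T}(\dot B^{d/p}_{p,1})\hookrightarrow L^{1}_{T}(L^{\infty})$, the velocity is spatially Lipschitz with integrable time dependence, so the method of characteristics produces smooth classical solutions $f^{n}$ of the regularised problems. Lemma~\ref{priori estimate} gives a uniform bound on $f^{n}$ in $L^{\infty}_{T}(\dot B^{s}_{p,r})$ and, applied to the difference $f^{n}-f^{m}$, shows that $(f^{n})$ is Cauchy in $C_{T}(\dot B^{s}_{p,r})$ when $r<\infty$; the limit $f$ solves the original equation. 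The four hypotheses on $v$ come in when one wants to interpret $v\cdot\nabla f$ through the Bony decomposition: at $s>1$ only $v\in L^{\rho}_{T}(L^{\infty}\cap\dot B^{d/p}_{p,\infty})$ is needed, at $s=1$ the remainder term forces the third index $r'$, and at $s<1$ the duality in the remainder produces the asymmetric split $p\le 2$ versus $p\ge 2$ listed in (3) and (4) --- exactly what is needed to place $v\cdot\nabla f\in L^{\rho}_{T}(\dot B^{s-1}_{p,r})$.

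When $r=\infty$ the Cauchy argument fails and one instead extracts a weak-$*$ limit from the uniform bound using the Fatou property of Besov spaces, recovering the weak continuity asserted in the statement. The main obstacle throughout is the careful paraproduct bookkeeping in the low-regularity range $s\le 1$: the spaces in which $v$ is assumed to lie are forced precisely by the exponents arising in the Bony remainder, and matching them correctly is where the delicate case split (1)--(4) originates; once that matching is in place the argument reduces to Lemma~\ref{priori estimate} together with the standard density of $\dot B^{s}_{p,r}$ when $r<\infty$.
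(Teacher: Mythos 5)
Your skeleton (regularize the data, get uniform bounds from Lemma \ref{priori estimate}, use the Bony decomposition of $v\cdot\nabla f$ to see where hypotheses (1)--(4) enter, pass to the limit) matches the paper's, but the mechanism you propose for passing to the limit has a genuine gap. First, to run the method of characteristics you must regularize the velocity as well --- the paper sets $v^n:=\rho_n*_tS_nv$ --- since $\nabla v\in L^1_T(\dot B^{d/p}_{p,1})$ only makes the flow bi-Lipschitz, not smooth; without this you cannot assert that $f^n$ is a ``smooth classical solution'', nor even that $f^n\in L^\infty_T(\dot B^{s}_{p,r})$, which is a standing hypothesis of Lemma \ref{priori estimate} (applying that lemma to an unregularized $v$ is circular, because the existence of a solution in that class is precisely what is being proved).

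Once $v$ is regularized, your Cauchy argument breaks down: the difference $f^n-f^m$ is transported by $v^n$ with the extra source term $(v^m-v^n)\cdot\nabla f^m$, and estimating this term in $\dot B^{s}_{p,r}$ costs one derivative of $f^m$ that is not uniformly controlled. Gronwall therefore only yields that $(f^n)$ is Cauchy one derivative below the target regularity, not in $C_T(\dot B^{s}_{p,r})$. This loss of a derivative is exactly why the paper does not attempt a Cauchy/contraction argument: instead it shows that $\bar f^n:=f^n-\int_0^tg^n\,ds=-\int_0^tv^n\nabla f^n\,ds$ is uniformly bounded in $C^\beta_T(\dot B^{d/p-1}_{p,\infty})\cap L^\infty_T(\dot B^{d/p}_{p,1})$ --- the four conditions on $v$ are used precisely to place $v^n\nabla f^n$ in $L^\rho_T(\dot B^{d/p-1}_{p,\infty})$ --- and then extracts a limit by the local compactness of Lemma \ref{compact}, Ascoli's theorem and a diagonal argument, recovering membership in $L^\infty_T(\dot B^{d/p}_{p,1})$ by the Fatou property and strong convergence in the intermediate norms by interpolation; continuity in time in the top norm is restored a posteriori as in Theorem 3.19 of \cite{book}. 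Your treatment of the $r=\infty$ case and the uniqueness step are fine, but the core existence argument needs to be replaced by this compactness scheme.
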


\begin{proof}
Without loss of generality, we only give the proof with $s=\frac{d}{p},r=1$, other cases are similar.

Firstly, we smooth out the data:
$$f^n_0:=S_nf_0,\quad g^n:=\rho_n*_tS_ng,\quad v^n:=\rho_n*_tS_nv.$$
Hence, the function
$$f^n(t,x)=f^n_0(\psi^{-1}_t(x))+\int_{0}^{t}g^n(s,\psi_s(\psi^{-1}_t(x)))ds$$
is a solution to
$$\frac{d}{dt}f^n(t,\psi_t(x))=g^n(t,\psi_t(x)).$$

Further, by Theorem 3.14 in \cite{book}, we have
\begin{align}\label{s2}
\|f^n\|_{\dot{B}^{\frac{d}{p}}_{p,1}}\leq Ce^{\int_{0}^{T}\|v\|_{\dot{B}^{1+\frac{d}{2}}_{p,1}}ds}(\|f^n_0\|_{\dot{B}^{\frac{d}{p}}_{p,1}}+\int_{0}^{T}\|g^n\|_{\dot{B}^{\frac{d}{p}}_{p,1}}ds).
\end{align}
Then, setting $\bar{f}^n:=f^n-\int_{0}^{t}g^n(s)ds=-\int_{0}^{t}v^n\nabla f^nds$, by the Bony decomposition, we get

\begin{equation}\label{s3}
\|v^n\nabla f^n\|_{L^{\rho}\dot{B}^{\frac{d}{p}-1}_{p,\infty}}\leq\left\{\begin{array}{lll}
\|v^n\|_{L^{\rho}(L^{\infty}\cap\dot{B}^{\frac{d}{p}}_{p,\infty})}\|f^n\|_{L^{\infty}\dot{B}^{\frac{d}{p}}_{p,1}},\quad\quad\quad\quad \text{when } s>1,\\
\|v^n\|_{L^{\rho}(L^{\infty}\cap\dot{B}^{\frac{d}{p}}_{p,\infty})}\|f^n\|_{L^{\infty}\dot{B}^{\frac{d}{p}}_{p,1}},\quad\quad\quad\quad \text{when } s=1,\\
\|v^n\|_{L^{\rho}(L^{\infty}\cap\dot{B}^{\frac{d}{p}}_{p,\infty}\cap\dot{B}^{\frac{d}{p'}}_{p',\infty})}\|f^n\|_{L^{\infty}\dot{B}^{\frac{d}{p}}_{p,1}},\quad \text{when } s<1,1\leq p\leq 2 ,\\
\|v^n\|_{L^{\rho}(L^{\infty}\cap\dot{B}^{\frac{d}{p}}_{p,\infty})}\|f^n\|_{L^{\infty}\dot{B}^{\frac{d}{p}}_{p,1}},\quad\quad\quad\quad \text{when } s<1, p\geq 2.\\
\end{array}\right.
\end{equation}
This implies that $\bar{f}^n$ is uniformly bounded in $C^{\beta}_T(\dot{B}^{\frac{d}{p}-1}_{p,\infty})\cap L^{\infty}_T(\dot{B}^{\frac{d}{p}}_{p,1})$. Lemma \ref{compact} guarantees that the map
$$\bar{f}^n\rightarrow \varphi \bar{f}^n,\quad\forall\varphi\in C^{\infty}_0 $$
is compact in $\dot{B}^{\frac{d}{p}-1}_{p,\infty}$. Combining Ascoli's theorem and Cantor's diagonal process thus ensures that
$$\varphi\bar{f}^n\rightarrow \varphi\bar{f}\quad in\quad C_T(\dot{B}^{\frac{d}{p}-1}_{p,\infty}).$$
By the Fatou property, we have $\varphi\bar{f}^n\rightharpoonup \varphi\bar{f}\in L^{\infty}_T(\dot{B}^{\frac{d}{p}}_{p,1})$. By interpolation,  we get
$$\varphi\bar{f}^n\rightarrow \varphi\bar{f}\quad in\quad C_T(\dot{B}^{\frac{d}{p}-\epsilon}_{p,1}),\quad 0<\epsilon <1.$$

Finally, applying the above results we can pass the limit in the weak sense:
$$f:=\lim_{n\rightarrow\infty}f^n=\bar{f}+\int_{0}^{t}g(s)ds.$$
It is easily to deduce that $f(t,x)$ is a solution of (\ref{s1}) and $f\in C_T(\dot{B}^{\frac{d}{p}}_{p,1})$ (For more details see Theorem 3.19 in \cite{book}). This completes the proof.
\end{proof}

\begin{rema}\label{Besov}
If  $div~v=0$, we can get the same result with a better indicator: $\max\{-\frac{d}{p},-\frac{d}{p'}\}<s<\frac{d}{p}$ ($s=\frac{d}{p},r=1$). The proof is similar to Lemma \ref{Besov}, we omit the detail here.
\end{rema}

\begin{defi}\cite{book}
Let $a>0$, $\mu(r)$ be a continue non-zero and non-decreasing function from $[0,a]$ to $\mathbb{R}^+$, $\mu(0)=0$. We say that $\mu$ is an Osgood modulus of continuity if
$$\int_{0}^{a}\frac{1}{\mu(r)}dr=+\infty.$$
\end{defi}

\begin{lemm}\label{osgood}\cite{book}
Let $\rho$ be a measurable function from $[0,T]$ to $[0,a]$, $\gamma$ a locally integrable function from $[0,T]$ to $\mathbb{R}^+$, and $\mu$ be an Osgood modulus of continuity. If for some $\rho_0\geq 0$,
$$\rho(t)\leq \rho_0+\int_{0}^{t}\gamma(s)\mu(\rho(s))ds\quad for\quad a.e.\quad t\in[0,T],$$
then we have
\begin{equation}
-M(\rho(t))+M(\rho_0)\leq \int_{0}^{t}\gamma(s)ds\quad with\quad M(x)=\int_{x}^{a}\frac{dr}{\mu(r)}.
\end{equation}
\end{lemm}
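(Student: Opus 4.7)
The idea is to replace $\rho$ by a smooth upper envelope, turn the integral inequality into a differential one, and then integrate using the antiderivative $M$.

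First I would treat the case $\rho_0 > 0$. Define
\[
R(t) := \rho_0 + \int_0^t \gamma(s)\,\mu(\rho(s))\,ds, \qquad t\in[0,T].
\]
Then $R$ is absolutely continuous, $R(t) \geq \rho_0 > 0$, and the hypothesis gives $\rho(t) \leq R(t)$ for a.e.\ $t$. Because $\mu$ is non-decreasing, $R'(t) = \gamma(t)\mu(\rho(t)) \leq \gamma(t)\mu(R(t))$ a.e. Since $R(t) > 0$, I may divide by $\mu(R(t)) > 0$. Noting that $M$ is $C^1$ on $(0,a]$ with $M'(r) = -1/\mu(r)$, we obtain
\[
-\frac{d}{dt}M(R(t)) \;=\; \frac{R'(t)}{\mu(R(t))} \;\leq\; \gamma(t) \qquad \text{a.e.}
\]
Integrating from $0$ to $t$ yields $M(\rho_0) - M(R(t)) \leq \int_0^t \gamma(s)\,ds$. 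Since $M$ is non-increasing and $\rho(t) \leq R(t)$, we have $M(R(t)) \leq M(\rho(t))$, hence $M(\rho_0) - M(\rho(t)) \leq \int_0^t \gamma(s)\,ds$, which is exactly the claimed inequality.

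For the degenerate case $\rho_0 = 0$, I would apply the inequality just proved with $\rho_0$ replaced by $\ep > 0$ (the hypothesis upgrades trivially to $\rho(t) \leq \ep + \int_0^t \gamma\mu(\rho)\,ds$), obtaining $M(\ep) - M(\rho(t)) \leq \int_0^t \gamma(s)\,ds$. The Osgood condition $\int_0^a dr/\mu(r) = +\infty$ makes $M(\ep) \to +\infty$ as $\ep \to 0^+$, which forces $M(\rho(t)) = +\infty$ and hence $\rho(t) = 0$, consistent with the convention $M(0) = +\infty$.

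The only delicate point is ensuring $R(t) \in (0,a]$ along the whole integration, so that $\mu(R)$ and $M(R)$ are well defined. I would handle this either by working on the maximal subinterval of $[0,T]$ where $R(t) \leq a$ (beyond which the conclusion becomes vacuous under the convention $M(x) = 0$ for $x \geq a$), or by harmlessly extending $\mu$ past $a$ by the constant value $\mu(a)$, which preserves monotonicity and only weakens the right-hand side of the differential inequality. This is the main (mild) technicality; everything else is a direct change of variable.
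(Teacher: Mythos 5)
Your proof is correct and is essentially the standard argument for the Osgood lemma: the paper itself only states this result with a citation to \cite{book} (where it is Lemma 3.4) and gives no proof, and your envelope-function argument — differentiating $R(t)$, using monotonicity of $\mu$, integrating via the antiderivative $M$, and handling $\rho_0=0$ by a limiting argument — is the textbook proof. The technicality you flag about keeping $R(t)\leq a$ is handled adequately by your proposed constant extension of $\mu$ past $a$, so there is no gap.
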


For example, if $\mu(r)=r$, we obtain the Gronwall inequality:
$$\rho(t)\leq \rho_0e^{\int_{0}^{t}\gamma(s)ds},\quad M(x)=lna-lnx.$$
If $\mu(r)=rln(e+c/r)$, it's easy to check that it is still an Osgood modulus of continuity. Then we have
$$\rho(t)\leq \rho_0\frac{ce^{e^{\int_{0}^{t}\gamma(s)ds}}}{c-\rho_0(e^{\int_{0}^{t}\gamma(s)ds}-e)},
\quad -M(\rho(t))+M(\rho_0)\geq ln[\frac{ln(e+\frac{c}{\rho_0})}{ln(e+\frac{c}{\rho(t)})}].$$
Since $\gamma$ is locally integrable, we deduce that if $\rho_0$ small enough such that $\rho_0\leq\frac{c}{2(e^{\int_{0}^{t}\gamma(s)ds}-e)}$, then
$$\rho(t)\leq 2\rho_0e^{e^{\int_{0}^{t}\gamma(s)ds}}.$$
\par

\section{Local existence and uniqueness}

We divide the proof of local existence and uniqueness into 4 steps:\\

\textbf{Step 1: An iterative scheme.}

Set $(u^n_0,b^n_0):=(\dot{S}_nu_0,\dot{S}_nb_0)$ and define the first term $(u^0,b^0):=(e^{t\Delta}u_0,e^{t\Delta}b_0)$. Then we introduce a sequence $(u^n,b^n)$ with the initial data $(u^n_0,b^n_0)$ by solving the following linear transport and heat conductive equations:
\begin{equation}\label{sp1}
\left\{\begin{array}{lll}
u^{n+1}_t-\Delta u^{n+1}=\mathbb{P}(b^{n}\nabla b^{n}-u^{n}\nabla u^{n}),\\
b^{n+1}_t+u^{n}\nabla b^{n+1}=b^{n}\nabla u^{n} ,\\
(u^n_0,b^n_0):=(\dot{S}_nu_0,\dot{S}_nb_0),
\end{array}\right.
\end{equation}
where $\dot{S}_ng:=\sum_{k<n}\dot{\Delta}_k g$, it makes sense in Besov spaces when $s<\frac{d}{p}$ or $s=\frac{d}{p},r=1$.
\quad\\

\textbf{Step 2: Uniform estimates.}

Taking advantage of Lemmas \ref{heat}-\ref{priori estimate}, we shall bound the approximating sequences in $E^p_T$. Now we claim that there exists some  $T$ independent of $n$ such that the solutions $(u^n,b^n)$ satisfy the following inequalities :
$$(H_1):\quad \|b^{n}\|_{L^{\infty}_T (\dot{B}^{\frac{d}{p}}_{p,1})}+\|u^{n}\|_{L^{\infty}_T (\dot{B}^{\frac{d}{p}-1}_{p,1})}\leq 6E_0, $$
$$(H_2):\quad \|u^{n}\|_{A_T}\leq 2a,\quad A_T:={L^{2}_T(\dot{B}^{\frac{d}{p}}_{p,1})\cap L^{1}_T(\dot{B}^{\frac{d}{p}+1}_{p,1}}),$$
where $E_0:=\|b_0\|_{\dot{B}^{\frac{d}{p}}_{p,1}}+\|u_0\|_{\dot{B}^{\frac{d}{p}-1}_{p,1}}$. Now we suppose that $a$ is small enough such that ($a$ will be determined later):
\begin{align}\label{xxiugailsp2}
a\leq \min\{\sqrt{\frac{E_0}{4C_1}},c\},
\end{align}
where $c$ is any positive real number satisfying $c\leq \frac{1}{12},e^{C_2c}\leq\frac{3}{2},4cC_1\leq\frac{1}{2}$. Suppose that $T$ satisfies that
\begin{align}\label{lsp2}
 C_1E^2_0T\leq\frac{1}{72}a,\quad 36C_1E_0T\leq 1,  \quad \|e^{t\Delta}u_0\|_{A_T}\leq a,
\end{align}
where $C_1$ and $C_2$ are the constants in Lemmas \ref{heat}-\ref{priori estimate}. (Indeed, we should take $C_1$ and $C_2$ more large as we need.)

It's easy to check that $(H_1)-(H_2)$ hold true for $n=0$. Now we will show that if $(H_1)-(H_2)$ hold true for $n$, then they hold true for $n+1$. In fact, by (\ref{xxiugailsp2})-(\ref{lsp2}) and Lemmas \ref{heat}-\ref{priori estimate}, we have
\begin{align}\label{lsp3}
\|u^{n+1}\|_{A_T} &\leq \|e^{t\Delta}u_0\|_{A_T}+\|\mathbb{P}div(-u^{n}\otimes u^{n}+b^{n}\otimes b^{n})\|_{L^{1}_T(\dot{B}^{\frac{d}{p}-1}_{p,1})}\notag \\
&\leq a+C_14a^2+36C_1E^2_0T \leq 2a,
\end{align}
\begin{align}\label{lsp4}
\|u^{n+1}\|_{L^{\infty}_T\dot{B}^{\frac{d}{p}-1}_{p,1}}&\leq \|e^{t\Delta}u_0\|_{L^{\infty}_T\dot{B}^{\frac{d}{p}-1}_{p,1}}+\|\mathbb{P}div(-u^{n}\otimes u^{n}+b^{n}\otimes b^{n})\|_{L^{1}_T\dot{B}^{\frac{d}{p}-1}_{p,1}}\notag \\
&\leq \|u_0\|_{\dot{B}^{\frac{d}{p}-1}_{p,1}}+\|\mathbb{P}div(-u^{n}\otimes u^{n}+b^{n}\otimes b^{n})\|_{L^{1}_T\dot{B}^{\frac{d}{p}-1}_{p,1}}\notag \\
&\leq E_0+C_14a^2+36C_1E^2_0T\leq 3E_0.
\end{align}
and
\begin{align}\label{lsp5}
\|b^{n+1}\|_{L^{\infty}_T(\dot{B}^{\frac{d}{p}}_{p,1})}&\leq e^{C_2a}(\|b_0\|_{\dot{B}^{\frac{d}{p}}_{p,1}}+\|div(u^{n}\otimes b^{n})\|_{L^{1}_T(\dot{B}^{\frac{d}{p}}_{p,1})}\notag \\
&\leq e^{C_2a}(E_0+12aE_0)\notag \\
&\leq 3E_0.
\end{align}
This implies $(H_1)-(H_2)$ hold true for $n+1$.

Finally, we have to obtain the relationship between the existence time $T$ and the initial data via (\ref{lsp2}). It is easy to deduce  that
$$T\leq T_0:=\min\{\frac{a}{72C_1E^2_0},\frac{1}{36C_1E_0}\}.$$
Now we turn to study the condition $\|e^{t\Delta}u_0\|_{A_T}\leq a$ of (\ref{lsp2}). For this purpose, we have to classify the initial data.\\
(1) For $\|u_0\|_{\dot{B}^{\frac{d}{p}-1}_{p,1}}\leq \bar{c}=\min\{\frac{1}{4C_1},c\}$, we let $a:=\min\{\sqrt{\frac{E_0}{4C_1}},c\}$, which implies \eqref{xxiugailsp2}.

Then we have
$$\|e^{t\Delta}u_0\|_{A_T}\leq \|u_0\|_{\dot{B}^{\frac{d}{p}-1}_{p,1}}\leq\min\{\sqrt{\frac{\|u_0\|_{\dot{B}^{\frac{d}{p}-1}_{p,1}}}{4C_1}},c\}\leq a. $$
(2) For $\|u_0\|_{\dot{B}^{\frac{d}{p}-1}_{p,1}}> \bar{c}=\min\{\frac{1}{4C_1},c\}$, we let $a:=\min\{\sqrt{\frac{\bar{c}}{4C_1}},c\}\leq \min\{\sqrt{\frac{E_0}{4C_1}},c\}$, which also implies \eqref{xxiugailsp2}.

Since $u_0\in\dot{B}^{\frac{d}{p}-1}_{p,1}$, there exists an integer $j_0$ such that ($j_0$ may not be unique):
\begin{align}\label{lsp5.5}
\sum_{|j|\geq j_0}\|\dot{\Delta}_ju_0\|_{L^p}2^{(\frac{d}{p}-1)j}< \frac{a}{4}.
\end{align}
Defining that $T_1:=\frac{a}{4}\frac{1}{2^{2j_0}\|u_0\|_{\dot{B}^{\frac{d}{p}-1}_{p,1}}}$ and $T_2:=\frac{a^2}{4^2}\frac{1}{2^{2j_0}\|u_0\|^2_{\dot{B}^{\frac{d}{p}-1}_{p,1}}}$, we get
\begin{align}\label{lsp6}
&\|e^{t\Delta}u_0\|_{L^{1}_{T_1}(\dot{B}^{\frac{d}{p}+1}_{p,1})}                                    \notag \\
&\leq \sum_{|j|\leq j_0}\int_{0}^{T_1}\|e^{t\Delta}\dot{\Delta}_ju_0\|_{L^p}2^{(\frac{d}{p}+1)j}dt+\sum_{|j|> j_0}\int_{0}^{T_1}e^{-t2^{2j}}\|\dot{\Delta}_ju_0\|_{L^p}2^{(\frac{d}{p}+1)j}dt         \notag \\
&\leq 2^{2j_0}\sum_{|j|\leq j_0}\int_{0}^{T_1}\|\dot{\Delta}_ju_0\|_{L^p}2^{(\frac{d}{p}-1)j}dt+\sum_{|j|> j_0}\int_{0}^{T_1}e^{-t2^{2j}}\|\dot{\Delta}_ju_0\|_{L^p}2^{(\frac{d}{p}+1)j}dt         \notag \\
&\leq 2^{2j_0}T_1\|u_0\|_{\dot{B}^{\frac{d}{p}-1}_{p,1}}+\sum_{|j|> j_0}(1-e^{-T_22^{2j}})\|\dot{\Delta}_ju_0\|_{L^p}2^{(\frac{d}{p}-1)j}                    \notag \\
&\leq 2^{2j_0}T_1\|u_0\|_{\dot{B}^{\frac{d}{p}-1}_{p,1}}+\sum_{|j|> j_0}\|\dot{\Delta}_ju_0\|_{L^p}2^{(\frac{d}{p}-1)j}\leq \frac{1}{2} a,
\end{align}
and
\begin{align}\label{lsp7}
&\|e^{t\Delta}u_0\|_{L^{2}_{T_2}(\dot{B}^{\frac{d}{p}}_{p,1})}                                    \notag \\
&\leq \sum_{|j|\leq j_0}[\int_{0}^{T_2}\|e^{t\Delta}\dot{\Delta}_ju_0\|^2_{L^p}dt]^{\frac{1}{2}}2^{\frac{d}{p}j}+\sum_{|j|> j_0}[\int_{0}^{T_2}(e^{-t2^{2j}}\|\dot{\Delta}_ju_0\|_{L^p})^2dt]^{\frac{1}{2}} 2^{\frac{d}{p}j}     \notag\\
&\leq 2^{j_0}T^{\frac{1}{2}}_2\|u_0\|_{\dot{B}^{\frac{d}{p}-1}}+\sum_{|j|> j_0}(1-e^{-T_22^{2j}})^{\frac{1}{2}}\|\dot{\Delta}_ju_0\|_{L^p}2^{(\frac{d}{p}-1)j}            \notag \\
&\leq 2^{j_0}T^{\frac{1}{2}}_2\|u_0\|_{\dot{B}^{\frac{d}{p}-1}}+\sum_{|j|> j_0}\|\dot{\Delta}_ju_0\|_{L^p}2^{(\frac{d}{p}-1)j}\leq \frac{1}{2} a.
\end{align}
Letting $T=\min\{T_0,T_1,T_2\}$,  we get
$$\|e^{t\Delta}u_0\|_{A_T}\leq a.$$

Finally, if we choose $T$ to satisfy that
\begin{equation}\label{lsp8}
  T=\begin{cases}
    T_0, & \|u_0\|_{\dot{B}^{\frac{d}{p}-1}_{p,1}}\leq \frac{1}{4C_1},   \\
    \min\{T_0, T_1, T_2\}, & \|u_0\|_{\dot{B}^{\frac{d}{p}-1}_{p,1}}>\frac{1}{4C_1},
  \end{cases}
\end{equation}
then (\ref{lsp2}) holds true. For this $T$, we have the approximate sequence $(u^n,b^n)$ is uniformly bounded in $E^p_T$.

\begin{rema}\label{j0}
By (\ref{lsp8}), we know that if the initial data is small, the local existence time $T$ depends only on $E_0$. However, for large initial data, the local existence time $T$ depends on both $E_0$ and the $j_0$ which satisfies (\ref{lsp5.5}).
\end{rema}
\quad\\
\textbf{Step 3: Existence of a solution.}\\

This step is similar to the process of \cite{book,Li1,miao}, we also use the compactness argument in Besov spaces for the approximate sequence $(u^n,b^n)$ to get some solution $(u,b)$ of (\ref{sp0}). Since $(u^n,b^n)$ is uniformly bounded in $E^p_T$, the interpolation inequality yields that $u^{n+1}$ is also uniformly bounded in $L^{q}_T(\dot{B}^{\frac{d}{2}-1+\frac{2}{q}}_{p,1})$ for $1\leq q\leq\infty$. Then, by Lemma \ref{heat}-\ref{priori estimate}, after some calculations, we can easily get that (for fixed $0<\epsilon<\frac{d}{p}$):
$$\partial_t u^{n+1} \text{is uniformly bounded in } L^{\frac{2}{2-\epsilon}}_T(\dot{B}^{\frac{d}{p}-1-\epsilon}_{p,1}+\dot{B}^{\frac{d}{p}-1}_{p,1}),$$
$$\partial_t b^{n+1} \text{is uniformly bounded in } L^{2}_T(\dot{B}^{\frac{d}{p}-1}_{p,1}).$$

Let $\{\chi_j\}_{j\in\mathbb{N}}$ be a sequence of smooth functions with value in $[0,1]$ supported in the ball $B(0,j+1)$ and equal to 1 on $B(0,j)$. The above argument ensures that $u^{n+1}$ is uniformly bounded in $C^{\sigma(\epsilon)}_T(\dot{B}^{\frac{d}{p}-1-\epsilon}_{p,1}+\dot{B}^{\frac{d}{p}-1}_{p,1})\cap C_T(\dot{B}^{\frac{d}{p}-1}_{p,1})$ ($\sigma(\epsilon)>0$ for fixed $\epsilon>0$ small enough), and $b^{n+1}$ is uniformly bounded in $C^{\frac{1}{2}}_T(\dot{B}^{\frac{d}{p}-1}_{p,1})\cap C_T(\dot{B}^{\frac{d}{p}}_{p,1})$. Then by Lemma \ref{compact} with $\epsilon_1=2\epsilon$ ($d\geq 2$), since the embedding $\dot{B}^{\frac{d}{p}-1-2\epsilon}_{p,1}\cap \dot{B}^{\frac{d}{p}-1}_{p,1}\hookrightarrow \dot{B}^{\frac{d}{p}-1-\epsilon}_{p,1}$ and $\dot{B}^{\frac{d}{p}-1-2\epsilon}_{p,1}\cap \dot{B}^{\frac{d}{p}}_{p,1}\hookrightarrow B^{\frac{d}{p}-1}_{p,1}$ are locally compact,
by applying Ascoli's theorem and Cantor's diagonal process, there exist some functions $(u_j,b_j)$ such that for any $j\in\mathbb{N}$, $\chi_ju^n$ tends to $u_j$, and $\chi_jb^n$ tends to $b_j$. As $\chi_j\chi_{j+1}=\chi_j$, we have $u_j=\chi_ju_{j+1}$ and $b_j=\chi_jb_{j+1}$. From that, we can easily deduce that there exists $(u,b)$ such that for all $\chi\in D(R^d)$,
\begin{equation}\label{step1}
  \left\{\begin{array}{l}
    \chi u^n\rightarrow \chi u \quad in\quad C_T(\dot{B}^{\frac{d}{p}-1-\epsilon}_{p,1}),  \\
    \chi b^n\rightarrow \chi b \quad in\quad C_T(\dot{B}^{\frac{d}{p}-1}_{p,1}),
  \end{array}\right.
\end{equation}
as n tends to $\infty$ (up to a subsequence). By interpolation, we have
\begin{equation}\label{step2}
  \left\{\begin{array}{l}
    \chi u^n\rightarrow \chi u \quad in\quad L^1_T(\dot{B}^{\frac{d}{p}+1-\delta}_{p,1}),\quad  0<\epsilon <1+\epsilon,   \\
    \chi b^n\rightarrow \chi b \quad in\quad C_T(\dot{B}^{\frac{d}{p}-\delta}_{p,1}),\quad    0<\delta <1.
  \end{array}\right.
\end{equation}
Note that $(u^n,b^n)$ is uniformly bounded in $E^p_T$. By the Fatou property, we readily get
$$(u,b)\in (\widetilde{L}^{\infty}(\dot{B}^{\frac{d}{p}-1}_{p,1})\cap L^{1}(\dot{B}^{\frac{d}{p}+1}_{p,1}))^d  \times (L^{\infty}(\dot{B}^{\frac{d}{p}}_{p,1}))^d.$$

Finally, it is a routine process to verify that $(u,b)$ satisfies the system (\ref{sp0}). Following the argment of Theorem 3.19 in \cite{book}, we have $(u,b)\in E^p_T$.\\

\textbf{Step 4: Uniqueness.}

The proof of the uniqueness of (\ref{sp0}) is similar to \cite{Li1} with $p\leq 2d$, we omit it here.\\

\section{Continuous dependence}

Before proving the continuous dependence of solutions to (\ref{sp0}), firstly we need to prove that let $T$ be a lifespan corresponding to the initial data $u_0$ by (\ref{lsp8}), if $(u^n_0,b^n_0)$ tends to $(u_0,b_0)$ in $\dot{B}^{\frac{d}{p}-1}_{p,1}\times \dot{B}^{\frac{d}{p}}_{p,1}$, then there exists a lifespan $T^n$ corresponding to $(u^n_0,b^n_0)$ such that $T^n\rightarrow T$. This implies that $T-\delta$ (for some small $\delta$) is a common lifespan both for $u^n$ and $u$ when $n$ is sufficiently large. We first give a useful lemma:
\begin{lemm}\label{gj}
Let $(u_0,b_0)\in \dot{B}^{\frac{d}{p}-1}_{p,1}\times\dot{B}^{\frac{d}{p}}_{p,1}$ be the initial data of (\ref{sp0}) with $p\leq 2d$, if there exists another initial data $(u^n_0,b^n_0)\in \dot{B}^{\frac{d}{p}-1}_{p,1}\times\dot{B}^{\frac{d}{p}}_{p,1}$ such that $\|u^n_0-u_0\|_{\dot{B}^{\frac{d}{p}-1}_{p,1}},\|b^n_0-b_0\|_{\dot{B}^{\frac{d}{p}}_{p,1}}\rightarrow 0\quad (n\rightarrow\infty)$, then we can construct a lifespan $T^n$ corresponding to $(u^n_0,b^n_0)$ such that
$$T^n\rightarrow T,\quad\quad n\rightarrow\infty ,$$
where the lifespan $T$ correspondsto $(u_0,b_0)$.
\end{lemm}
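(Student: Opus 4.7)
The plan is to unpack the explicit formula for $T$ in (\ref{lsp8}) and verify that each ingredient---the constant $a$, the integer $j_0$, and the candidate times $T_0$, $T_1$, $T_2$---can be reproduced for $(u^n_0, b^n_0)$ in such a way that it converges to the corresponding ingredient for $(u_0, b_0)$. The hypothesis $(u^n_0, b^n_0) \to (u_0, b_0)$ immediately gives $E_0^n := \|u^n_0\|_{\dot{B}^{\frac{d}{p}-1}_{p,1}} + \|b^n_0\|_{\dot{B}^{\frac{d}{p}}_{p,1}} \to E_0$, so the whole proof reduces to controlling the discrete choices of $a$ and $j_0$ across the sequence.

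In the small-data branch $\|u_0\|_{\dot{B}^{\frac{d}{p}-1}_{p,1}} < \bar c$, for $n$ large I have $\|u^n_0\|_{\dot{B}^{\frac{d}{p}-1}_{p,1}} < \bar c$ as well, so I set $a^n := \min\{\sqrt{E_0^n/(4C_1)}, c\}$ and $T^n := T_0^n$. Both $a^n \to a$ and $T^n \to T_0 = T$ follow at once from the elementary continuity of the defining rational expressions in $E_0^n$, since the heat estimate $\|e^{t\Delta}u^n_0\|_{A_{T^n}} \leq \|u^n_0\|_{\dot{B}^{\frac{d}{p}-1}_{p,1}} \leq a^n$ holds without any frequency truncation.

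In the large-data branch $\|u_0\|_{\dot{B}^{\frac{d}{p}-1}_{p,1}} > \bar c$ the parameter $a = \min\{\sqrt{\bar c/(4C_1)}, c\}$ is a universal constant, so I set $a^n := a$. The nontrivial step is producing an integer $j_0^n$ satisfying (\ref{lsp5.5}) for which $T_1^n, T_2^n$ converge, since a naive choice of $j_0^n$ from the Littlewood--Paley tail of $u^n_0$ could jump discontinuously even when $u^n_0$ is very close to $u_0$. The fix is to sharpen the original choice of $j_0$ for $u_0$, picking it so that
\begin{equation*}
\sum_{|j| \geq j_0} \|\dot{\Delta}_j u_0\|_{L^p} 2^{(\frac{d}{p}-1)j} < \frac{a}{8}
\end{equation*}
(strict slack instead of the required $\frac{a}{4}$). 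The triangle inequality in $\dot{B}^{\frac{d}{p}-1}_{p,1}$ then yields
\begin{equation*}
\sum_{|j| \geq j_0} \|\dot{\Delta}_j u^n_0\|_{L^p} 2^{(\frac{d}{p}-1)j} \leq \frac{a}{8} + \|u^n_0 - u_0\|_{\dot{B}^{\frac{d}{p}-1}_{p,1}} < \frac{a}{4}
\end{equation*}
for $n$ large, so $j_0^n := j_0$ is a valid choice for $u^n_0$. With a common $j_0$, the quantities $T_1^n$ and $T_2^n$ become explicit rational expressions in $\|u^n_0\|_{\dot{B}^{\frac{d}{p}-1}_{p,1}}$, so $T_1^n \to T_1$ and $T_2^n \to T_2$; combined with $T_0^n \to T_0$ this gives $T^n = \min\{T_0^n, T_1^n, T_2^n\} \to T$.

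The main obstacle throughout is precisely this $j_0$-dependence, which is a nonlinear function of the Littlewood--Paley tail of the data; the slack choice above is the only device needed to overcome it. The threshold case $\|u_0\|_{\dot{B}^{\frac{d}{p}-1}_{p,1}} = \bar c$, where the branches meet, can be handled by passing to subsequences on either side of $\bar c$ and applying the appropriate construction above: the constraints $4cC_1 \leq \frac{1}{2}$ and $\bar c = \min\{1/(4C_1), c\}$ imposed in Step 2 force the case-1 and case-2 values of $a$ to coincide at the boundary, so continuity of all the remaining ingredients still produces $T^n \to T$.
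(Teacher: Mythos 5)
Your proof is correct in substance but takes a genuinely simpler route than the paper at the key point. The paper fixes $j_0$ to be the \emph{smallest} integer with tail $<\frac{a}{4}$ and then labors to produce a sequence $j^n_0\to j_0$: it introduces thresholds $N_{\epsilon/m}$, a monotone family $\bar j^m_0=j^{\epsilon/m}_0$ defined by tails $<\frac{a}{4}-\frac{\epsilon}{m}$, shows $\bar j^m_0\to\bar j_0=j_0$ by a contradiction argument, and finally glues these into a staircase $j^n_0$ via \eqref{lammaequ1-1-5}. You sidestep all of this by choosing one fixed $j_0$ with strict slack (tail $<\frac{a}{8}$) and observing via the triangle inequality that this \emph{same} $j_0$ satisfies \eqref{lsp5.5} for every $u^n_0$ with $n$ large, so $T^n_1,T^n_2$ are rational expressions in $\|u^n_0\|_{\dot B^{\frac dp-1}_{p,1}}$ with a common $2^{2j_0}$ and converge trivially. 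The price is that your $T$ is built from a possibly larger $j_0$ than the minimal one, hence is possibly smaller than the paper's $T$; but since \eqref{lsp8} explicitly allows $j_0$ to be non-unique, your $T$ is still a legitimate lifespan for $(u_0,b_0)$, and the application in Theorem \ref{continuous dependence} only needs \emph{some} common positive existence time, so nothing is lost. Your treatment is, if anything, more complete than the paper's, since you also address the small-data branch which the paper dismisses via Remark \ref{j0}.

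One small imprecision: in the threshold case $\|u_0\|_{\dot B^{\frac dp-1}_{p,1}}=\bar c$ you assert that the case-1 and case-2 values of $a$ coincide at the boundary. They need not, since the case-1 value $\min\{\sqrt{E_0/(4C_1)},c\}$ involves $E_0=\|u_0\|_{\dot B^{\frac dp-1}_{p,1}}+\|b_0\|_{\dot B^{\frac dp}_{p,1}}$ rather than $\|u_0\|_{\dot B^{\frac dp-1}_{p,1}}$ alone, so the two subsequential limits of $T^n$ could differ. The clean fix is to note that the large-data construction (with $j_0$, $T_1$, $T_2$) is valid for data of \emph{any} size --- it verifies $\|e^{t\Delta}u_0\|_{A_T}\le a$ unconditionally --- so you may simply apply it uniformly to the whole sequence and drop the case split; the paper implicitly does the same by restricting attention to large data.
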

\begin{proof}
By virtue of Remark \ref{j0}, we only consider the large initial data. Thus, we need to prove that $T^n\rightarrow T$, when $\|u_0\|_{\dot{B}^{\frac{d}{p}-1}_{p,1}}>\frac{1}{4C_1}$. For convenience, we write down the definitions of $T_0, T_1, T_2$ here:
$$T_0=\min\{\frac{a}{72C_1E^2_0},\frac{1}{36C_1E_0}\},\quad T_1=\frac{a}{4}\frac{1}{2^{2j_0}\|u_0\|_{\dot{B}^{\frac{d}{p}-1}_{p,1}}}, \quad T_2=\frac{a^2}{4^2}\frac{1}{2^{2j_0}\|u_0\|^2_{\dot{B}^{\frac{d}{p}-1}}},$$
where $j_0$ is a fixed integer such that
$$\sum_{|j|\geq j_0}\|\dot{\Delta}_j u_0\|_{L^p}2^{(\frac{d}{p}-1)j}< \frac{a}{4}.$$
Since $u_0\in \dot{B}^{\frac{d}{p}-1}_{p,1}$, we can suppose that $j_0$ is the smallest integer such that the above inequality holds true.
Since $E^n_0\rightarrow E_0$, it follows that $T^n_0\rightarrow T_0$. In order to prove that $T^n_1 \rightarrow  T_1$ and $T^n_2 \rightarrow T_2$, it is sufficient to show that there exists a corresponding sequence $j^n_0$ satisfying
$$\sum_{|j|\geq j^n_0}\|\dot{\Delta}_j u^n_0\|_{L^p}2^{(\frac{d}{p}-1)j}< \frac{a}{4},$$
and $j^n_0\rightarrow j_0$.


For any $0<\epsilon<\frac{a}{4}$, there exists $N_{\epsilon}$ such that for $n\geq N_{\epsilon}$ we have
$$\|u^n_0-u_0\|_{\dot{B}^{\frac{d}{p}-1}_{p,1}}\leq \epsilon.$$
For this $\epsilon$, we define that $j^{\epsilon}_0$ is the smallest integer such that
$$\sum_{|j|\geq j^{\epsilon}_0}\|\dot{\Delta}_j u_0\|_{L^p}2^{(\frac{d}{p}-1)j}< \frac{a}{4}-\epsilon.$$
By the definition of $j_0$, we have $j_0\leq j^{\epsilon}_0$.

Replacing $\epsilon $ by $\frac{\epsilon}{m}~(m\in \mathbb{N}^+)$, we can find $N_{\frac{\epsilon}{m}}$ such that for $n\geq N_{\frac{\epsilon}{m}}$,
$$ \|u^n_0-u_0\|_{\dot{B}^{\frac{d}{p}-1}_{p,1}}\leq \frac{\epsilon}{m}.$$
For this $\frac{\epsilon}{m}$, we define that $j^{\frac{\epsilon}{m}}_0$ is the smallest integer such that
$$\sum_{|j|\geq j^{\frac{\epsilon}{m}}_0}\|\dot{\Delta}_j u_0\|_{L^p}2^{(\frac{d}{p}-1)j}< \frac{a}{4}-\frac{\epsilon}{m}.$$
Since $\frac{a}{4}-\frac{\epsilon}{m}>\frac{a}{4}-\frac{\epsilon}{m-1}$, it follows that
$$j_0\leq j^{\frac{\epsilon}{m}}_0\leq j^{\frac{\epsilon}{m-1}}_0.$$

Now letting $\bar{j}^m_0:=j^{\frac{\epsilon}{m}}_0,m=1,2,3,...$, we deduce that
\begin{align}\label{jianqie}
\sum_{|j|\geq\bar{j}^m_0}\|\dot{\Delta_j}u^n_0\|_{L^p}2^{(\frac{d}{p}-1)j}\leq \|u^n_0-u_0\|_{\dot{B}^{\frac{d}{p}-1}_{p,1}}+\sum_{|j|>\bar{j}^m_0}\|\dot{\Delta}_j u_0\|_{L^p}2^{(\frac{d}{p}-1)j}
<\frac{\epsilon}{m}+\frac{a}{4}-\frac{\epsilon}{m}=\frac{a}{4},\quad n\geq N_{\frac{\epsilon}{m}}.
\end{align}
Since $\bar{j}^m_0$ is a monotone and bounded sequence, we deduce that $\bar{j}^m_0\rightarrow \bar{j}_0$ $(m\rightarrow\infty)$ for some integer $\bar{j}_0\geq j_0$. For any $0<\bar{\epsilon}<1$ there exists $N$ such that if $m\geq N$
$$ |\bar{j}^m_0-\bar{j}_0|\leq\bar{\epsilon}<1,$$
Note that $\bar{j}^m_0,\bar{j}_0$ are integers, we deduce that $\bar{j}_0=\bar{j}^m_0$ when $m\geq N$ and $\bar{j}_0$ is the smallest integer such that
$$\sum_{|j|\geq\bar{j}_0}\|\dot{\Delta}_ju_0\|_{L^p}2^{(\frac{d}{p}-1)j}=\sum_{|j|\geq\bar{j}^m_0}\|\dot{\Delta}_ju_0\|_{L^p}2^{(\frac{d}{p}-1)j}< \frac{a}{4}-\frac{\epsilon}{m}.$$

We claim that $\bar{j}_0=j_0$. Otherwise, if $\bar{j}_0>j_0$, we deduce from the above inequality that
$$\sum_{|j|\geq j_0}\|\dot{\Delta}_ju_0\|_{L^p}2^{(\frac{d}{p}-1)j}\geq \frac{a}{4}-\frac{\epsilon}{m},\quad \forall m\geq N.$$
Since the left hand-side of the above inequality is independent of $m$, we have $$\sum_{|j|\geq j_0}\|\dot{\Delta}_ju_0\|_{L^p}2^{(\frac{d}{p}-1)j}\geq \frac{a}{4}.$$
This contradicts the definition of $j_0$. So we have $\bar{j}^m_0\rightarrow \bar{j}_0=j_0$ $(m\rightarrow\infty)$.

Finally, taking $\epsilon=\frac{a}{8}<\frac{a}{4}$, we can construct a sequence $\{j^n_0\}$ by $\{\bar{j}^m_0\}$ when $n\geq N_{\epsilon}$:
\begin{equation}\label{lammaequ1-1-5}
  j^n_0:=\left\{\begin{array}{l}
    \bar{j}^1_0, \quad N_{\epsilon}\leq n<N_{\frac{\epsilon}{2}},\\
    \bar{j}^2_0, \quad N_{\frac{\epsilon}{2}}\leq n< N_{\frac{\epsilon}{3}},\\
    ... ...\\
    \bar{j}^m_0, \quad N_{\frac{\epsilon}{m}}\leq n< N_{\frac{\epsilon}{m+1}},\\
    ... ...\\
  \end{array}\right.
\end{equation}
By virtue of \eqref{jianqie}, one can check that
$$\sum_{|j|\geq j^n_0}\|\dot{\Delta}_ju^n_0\|_{L^p}2^{(\frac{d}{p}-1)j}< \frac{a}{4}.$$
Using the monotone bounded theorem, one can prove that $j^n_0\rightarrow j_0(n\rightarrow\infty)$.
Therefore, we have
$$T^n_1\rightarrow T_1, T^n_2\rightarrow T_2  \quad\Longrightarrow T^n\rightarrow T,\quad n\rightarrow\infty .$$
This completes the proof of the lemma.
\end{proof}

\begin{rema}
The sequence $\bar{j}^m_0$ we construct in the proof of Lemma \ref{gj} is only a subsequence since $m\neq n$ but depends on $n$. And one can obtain a subsequence $T^{n_m}$ of $T^n$ such that $T^{n_m}\to T$. This is much weaker than previous one. Therefore , we have to construct the $j^n$ by \eqref{lammaequ1-1-5}.
\end{rema}

\begin{rema}
By Lemma \ref{gj}, letting $T$ be the lifespan time of $(u^{\infty},b^{\infty})$, then we can define a $T^n$ corresponding with $(u^n,b^n)$ such that $T^n\rightarrow T,n\rightarrow\infty$. That is, for fixed any small $\delta >0$, there exists an integer $N$, when $n\geq N$, we have
$$|T^n-T|< \delta .$$

Thus, we can consider $T_n:=\min\{T^{n},T\}$ as the common lifespan both for $(u^{\infty},b^{\infty})$ and $(u^{n},b^{n})$. Then we still have
$$T_n\rightarrow T,\quad n\rightarrow\infty .$$
Roughly, we can choose $T-\delta$ as the common lifespan both for $(u^{\infty},b^{\infty})$ and $(u^{n},b^{n})$, which is independent of $n$.
\end{rema}
Now we begin to prove the continuous dependence.
\begin{theo}\label{continuous dependence}
Let $p\leq 2d$. Assume that $(u^n,b^n)_{n\in\mathbb{N}}$ be the solution to the system (\ref{sp0}) with the initial data $(u^n_0,b^n_0)_{n\in\mathbb{N}}$. If $(u^n_0,b^n_0)$ tends to $(u^{\infty}_0,b^{\infty}_0)$ in $\dot{B}^{\frac{d}{p}-1}_{p,1}\times\dot{B}^{\frac{d}{p}}_{p,1}$, then there exists a positive ${T}$ independent of $n$ such that $(u^n,b^n)$ tends to $(u^{\infty},b^{\infty})$ in $C_{T}(\dot{B}^{\frac{d}{p}-1}_{p,1})\cap L^{1}_{T}(\dot{B}^{\frac{d}{p}+1}_{p,1})\times C_{T}(\dot{B}^{\frac{d}{p}}_{p,1})$.
\end{theo}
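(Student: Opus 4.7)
The plan is to combine the common lifespan supplied by Lemma \ref{gj} with a high/low frequency decomposition of the initial data, so as to reduce the problem to (a) stability between \emph{smooth} solutions, and (b) smallness of the frequency truncation error. First, by Lemma \ref{gj} and the following remark, I fix a time $T>0$ on which $(u^n,b^n)$ and $(u^\infty,b^\infty)$ are all defined, with a uniform bound
$$\|u^n\|_{E^p_T}+\|b^n\|_{L^\infty_T(\dot B^{d/p}_{p,1})}\le M,\qquad n\in\mathbb N\cup\{\infty\},$$
inherited from the uniform estimates of Step 2 of Section 3.

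For each $N$ I introduce auxiliary solutions $(u^{n,N},b^{n,N})$ of (\ref{sp0}) with mollified initial data $(\dot S_N u^n_0,\dot S_N b^n_0)$; these exist on the same $[0,T]$ (uniformly in $n,N$, since $\dot S_N$ does not increase the relevant critical norms) and are smooth in every $\dot B^s_{p,1}$. I split
$$u^n-u^\infty=\underbrace{(u^n-u^{n,N})}_{(\mathrm I)}+\underbrace{(u^{n,N}-u^{\infty,N})}_{(\mathrm{II})}+\underbrace{(u^{\infty,N}-u^\infty)}_{(\mathrm{III})},$$
and analogously for $b$. Given $\varepsilon>0$, I will first pick $N$ large so that (I) and (III) are below $\varepsilon/3$ uniformly in $n$, and then take $n$ large so that (II) is below $\varepsilon/3$.

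For (I) and (III) the difference solves a linearized system whose source is quadratic in the background solutions (hence controlled by $M$) with initial data $(I-\dot S_N)u^n_0$ and $(I-\dot S_N)b^n_0$, small in the critical space. Because the transport equation for $b$ loses a derivative at critical regularity (the origin of the $p\le 2d$ restriction), I would first estimate the difference in the weaker space $\dot B^{d/p-2}_{p,1}\times\dot B^{d/p-1}_{p,1}$ using Lemmas \ref{zhibiao}, \ref{heat}, \ref{priori estimate}, closing it via the Osgood Lemma \ref{osgood}. For (II) the initial data $\bigl(\dot S_N(u^n_0-u^\infty_0),\dot S_N(b^n_0-b^\infty_0)\bigr)$ tend to zero in every Besov space as $n\to\infty$ for fixed $N$, so the same difference argument now runs at a regularity \emph{above} the critical one, where no derivative loss occurs, and yields $(\mathrm{II})\to0$ directly.

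The hard part will be upgrading the smallness of (I) and (III) in $\dot B^{d/p-2}_{p,1}\times\dot B^{d/p-1}_{p,1}$ to convergence in the critical space $E^p_T\times C_T(\dot B^{d/p}_{p,1})$. This requires a uniform-in-$n$ tightness of the high frequencies: for every $\varepsilon>0$ there is $J_0$ such that
$$\sum_{|j|>J_0}2^{j(d/p-1)}\|\dot\Delta_j u^n\|_{L^\infty_T L^p}+\sum_{|j|>J_0}2^{j\,d/p}\|\dot\Delta_j b^n\|_{L^\infty_T L^p}<\varepsilon \quad\text{for all } n.$$
This tail bound should be inherited from the equi-smallness of the high frequencies of $(u^n_0,b^n_0)$ — which follows from the convergence $(u^n_0,b^n_0)\to(u^\infty_0,b^\infty_0)$ in the critical space, via a choice of cut-off $j^n_0$ akin to \eqref{lammaequ1-1-5} — and propagated in time by Lemmas \ref{heat} and \ref{priori estimate}. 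Combining this tightness with the weaker-space convergence through a dyadic splitting and Bernstein inequalities yields the desired convergence in the critical space and closes the proof.
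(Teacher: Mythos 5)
Your skeleton coincides with the paper's: the same three-term splitting $u^n-u^\infty=(u^n-u^{n,N})+(u^{n,N}-u^{\infty,N})+(u^{\infty,N}-u^\infty)$ with $\dot S_N$-truncated data, the same order of limits ($N$ first, uniformly in $n$, then $n$), the same treatment of the middle term at a regularity above critical where the truncated data are smooth, and the same preliminary estimate of the truncation error in a weaker norm via logarithmic interpolation and Osgood's lemma. The gap is in your last step, which is also the heart of the matter: upgrading the weak-norm smallness of $(u^n-u^{n,N},b^n-b^{n,N})$ to the critical norm. You propose to do this through a uniform-in-$n$ tail bound $\sum_{|j|>J_0}2^{jd/p}\|\dot\Delta_j b^n\|_{L^\infty_T L^p}<\varepsilon$, ``propagated in time by Lemmas \ref{heat} and \ref{priori estimate}.'' Those lemmas give norm bounds, not tail bounds, and for the transport equation satisfied by $b$ frequency localization is simply not propagated: composition with the flow of $u^n$ redistributes all frequencies, and in the a priori estimate the summable weights $c_j(t)$ coming from the commutator $[\dot\Delta_j,u^n\cdot\nabla]$ depend on $(u^n,b^n)$ themselves, so their tails are not controlled uniformly in $n$ by the tails of the data. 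Such equi-summability of the family $\{b^n(t)\}$ in $\dot B^{d/p}_{p,1}$ is essentially equivalent to the compactness/continuity you are trying to establish, so as written you are assuming the crux.

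The paper closes this step by a different device. It writes $b^n_j=w^n_j+z^n_j$, where $w^n_j$ solves the transport equation driven by $u^n_j$ but with the \emph{untruncated} data $b^n_0$ and the \emph{untruncated} source $F^\infty=b^n\nabla u^n$ (see \eqref{spp5}--\eqref{spp6}). Then $w^n_j-w^n_\infty$ only sees the difference of the transport fields, which is already known to be small in $L^1_t(\dot B^{\frac{d}{p}}_{p,1})$ from the weak-norm step; a second frequency truncation $w^n_{jk}$ of the source and data, together with a double limit ($k$ first, uniformly, then $j$), makes this rigorous without any tail bound on $b^n(t)$. The remainder $z^n_j$ has small data $(\dot S_j-Id)b^n_0$ and a source $F^j-F^\infty$ bounded by $\|z^n_j\|_{\dot B^{d/p}_{p,1}}\|u^n_j\|_{\dot B^{d/p+1}_{p,1}}$ plus quantities already shown to vanish, so Gronwall closes the estimate, and the critical-norm convergence of $\delta_j u$ then follows from the coupled inequality \eqref{spp33}. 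You would need to supply an argument of this type, or an actual proof of your tail bound, for the proposal to be complete.
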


\begin{proof}
Our aim is to estimate $\|u^n-u^{\infty}\|_{L^{\infty}_{T}(\dot{B}^{\frac{d}{p}-1}_{p,1})\cap L^{1}_{T}(\dot{B}^{\frac{d}{p}+1}_{p,1})}$ and $\|b^n-b^{\infty}\|_{L^{\infty}_{T}(\dot{B}^{\frac{d}{p}}_{p,1})}$ when $n\rightarrow\infty$. Note that
\begin{equation}\label{frequence}
  \left\{\begin{array}{l}
    \|u^n-u^{\infty}\|_{{L^{\infty}_{T}(\dot{B}^{\frac{d}{p}-1}_{p,1})\cap L^{1}_{{T}}(\dot{B}^{\frac{d}{p}+1}_{p,1})}}\\
    \leq\|u^n-u^n_j\|_{{L^{\infty}_{{T}}(\dot{B}^{\frac{d}{p}-1}_{p,1})\cap L^{1}_{{T}}(\dot{B}^{\frac{d}{p}+1}_{p,1})}}+\|u^n_j-u^{\infty}_j\|_{{L^{\infty}_{{T}}(\dot{B}^{\frac{d}{p}-1}_{p,1})\cap L^{1}_{{T}}(\dot{B}^{\frac{d}{p}+1}_{p,1})}}
    +\|u^{\infty}_j-u^{\infty}\|_{{L^{\infty}_{{T}}(\dot{B}^{\frac{d}{p}-1}_{p,1})\cap L^{1}_{{T}}(\dot{B}^{\frac{d}{p}+1}_{p,1})}},\\
    \|b^n-b^{\infty}\|_{L^{\infty}_{T}(\dot{B}^{\frac{d}{p}}_{p,1})}\\
    \leq\|b^n-b^n_j\|_{L^{\infty}_{T}(\dot{B}^{\frac{d}{p}}_{p,1})}
    +\|b^n_j-b^{\infty}_j\|_{L^{\infty}_{T}(\dot{B}^{\frac{d}{p}}_{p,1})}
    +\|b^{\infty}_j-b^{\infty}\|_{L^{\infty}_{T}(\dot{B}^{\frac{d}{p}}_{p,1})},
  \end{array}\right.
\end{equation}
where
$$(u^n,b^n) \text{ corresponds to the initial data } (u^n_0,b^n_0), \quad n\in \mathbb{N}\cup {\infty},$$
$$(u^{n}_j,b^{n}_j) \text{ corresponds to the initial data } (\dot{S}_ju^n_0,\dot{S}_jb^n_0),  \quad n\in \mathbb{N}\cup {\infty}.$$

By Lemma \ref{gj}, we find that $T-\delta$ (we still write it as $T$) is the common lifespan for $(u^{n},b^{n})$, $(u^{n}_j,b^{n}_j)$, $(u^{\infty},b^{\infty})$ and $(u^{\infty}_j,b^{\infty}_j)$ when $n,j$ are large enough. By the argument as in Step 2, since $(u^n_0,b^n_0)\rightarrow(u^{\infty}_0,b^{\infty}_0)$ and $(\dot{S}_ju^n_0,\dot{S}_jb^n_0)\rightarrow(u^{n}_0,b^{n}_0)$ in $\dot{B}^{\frac{d}{p}-1}_{p,1}\times\dot{B}^{\frac{d}{p}}_{p,1}$, it follows that for any large $n$ and $j$,
\begin{align}\label{jie}
\|u^n,u^n_j\|_{L^{\infty}_{T}(\dot{B}^{\frac{d}{p}-1}_{p,1})},\quad\|b^n,b^n_j\|_{L^{\infty}_{T}(\dot{B}^{\frac{d}{p}}_{p,1})}\leq C_{E_0},\quad\|u^n\|_{L^{p}_{T}(\dot{B}^{\frac{d}{p}}_{p,1})\cap L^{1}_{T}(\dot{B}^{\frac{d}{p}+1}_{p,1})}\leq 2a\leq \frac{1}{4C_1},
\end{align}
where $E^n_0:=\|u^{n}_0\|_{\dot{B}^{\frac{d}{p}-1}_{p,1}}+\|b^{n}_0\|_{\dot{B}^{\frac{d}{p}}_{p,1}}$, $a$ is a small quantity satisfying \eqref{xxiugailsp2}.
For any $t\in [0.T]$, we now divide the estimations of \eqref{frequence} into 4 steps.\\
\quad\\
\textbf{Step 1. Estimate }$\|u^n_j-u^{\infty}_j\|_{{L^{\infty}_{T}(\dot{B}^{\frac{d}{p}-1}_{p,1})\cap L^{1}_{T}(\dot{B}^{\frac{d}{p}+1}_{p,1})}}$ \textbf{ and } $\|b^n_j-b^{\infty}_j\|_{L^{\infty}_{T}(\dot{B}^{\frac{d}{p}}_{p,1})}$ \textbf{for fixed $j$}.

Recall the equations of $(u^n_j,b^n_j)$, $n\in\mathbb{N}\cup \{\infty\}$:
\begin{equation}\label{spnj1}
\left\{\begin{array}{lll}
u^{n}_{jt}-\Delta u^{n}_j=\mathbb{P}(b^{n}_j\nabla b^{n}_j+u^{n}_j\nabla u^{n}_j),\\
b^{n}_{jt}+u^{n}_j\nabla b^{n}_j=b^{n}_j\nabla u^{n}_j ,\\
(u^n_0,b^n_0):=(\dot{S}_ju^n_0,\dot{S}_jb^n_0).
\end{array}\right.
\end{equation}

Multiplying both sides of the first equation in \eqref{spnj1} by $\eta$ ($\eta$ is determined later) and applying Lemmas \ref{heat}-\ref{priori estimate} to \eqref{spnj1}, we have
\begin{align}\label{spnju}
&\eta(\|u^n_j\|_{\dot{B}^{\frac{d}{p}}_{p,1}}+\|u^n_j\|_{L^2_t(\dot{B}^{\frac{d}{p}+1}_{p,1})}+\|u^n_j\|_{L^1_t(\dot{B}^{\frac{d}{2}+2}_{p,1})})\notag\\
&\leq \eta\|\dot{S}_ju^{n}_0\|_{\dot{B}^{\frac{d}{p}}_{p,1}}
+\eta\int_{0}^{t}\|u^n_j\|_{\dot{B}^{\frac{d}{p}}_{p,1}}\|u^n_j\|_{\dot{B}^{\frac{d}{p}+1}_{p,1}}
+\|b^n_j\|_{\dot{B}^{\frac{d}{p}}_{p,1}}\|b^n_j\|_{\dot{B}^{\frac{d}{p}+1}_{p,1}}ds\notag\\
&\leq 2^j\eta\|\dot{S}_ju^{n}_0\|_{\dot{B}^{\frac{d}{p}-1}_{p,1}}
+\eta\int_{0}^{t}\|u^n_j\|_{\dot{B}^{\frac{d}{p}}_{p,1}}\|u^n_j\|_{\dot{B}^{\frac{d}{p}+1}_{p,1}}
+\|b^n_j\|_{\dot{B}^{\frac{d}{p}}_{p,1}}\|b^n_j\|_{\dot{B}^{\frac{d}{p}+1}_{p,1}}ds
\end{align}
and
\begin{align}\label{spnjb}
\|b^n_j\|_{\dot{B}^{\frac{d}{p}+1}_{p,1}}
&\leq \|\dot{S}_jb^{n}_0\|_{\dot{B}^{\frac{d}{p}+1}_{p,1}}
+C_{E_0}\int_{0}^{t}\|b^n_j\|_{\dot{B}^{\frac{d}{p}+1}_{p,1}}\|u^n_j\|_{\dot{B}^{\frac{d}{p}+1}_{p,1}}
+\|b^n_j\|_{\dot{B}^{\frac{d}{p}}_{p,1}}\|u^n_j\|_{\dot{B}^{\frac{d}{p}+2}_{p,1}}ds\notag\\
&\leq 2^j\|\dot{S}_jb^{n}_0\|_{\dot{B}^{\frac{d}{p}}_{p,1}}
+C'_{E_0}\|u^n_j\|_{L^1_T(\dot{B}^{\frac{d}{p}+2}_{p,1})}+C\int_{0}^{t}\|u^n_j\|_{\dot{B}^{\frac{d}{p}+1}_{p,1}}\|b^n_j\|_{\dot{B}^{\frac{d}{p}+1}_{p,1}}ds,
\end{align}
where we used the fact that $\|\dot{S}_jg\|_{\dot{B}^{\frac{d}{p}}_{p,1}}\leq C2^{m}\|\dot{S}_jg\|_{\dot{B}^{\frac{d}{p}-m}_{p,1}},\quad m>0 .$

Then setting $\eta>4C'_{E_0}$, combining \eqref{spnju}, \eqref{spnjb} and the Gronwall inequality, we thus have
\begin{align}\label{spnjbu}
&\frac{\eta}{2}(\|u^n_j\|_{\dot{B}^{\frac{d}{p}}_{p,1}}+\|u^n_j\|_{L^2_t(\dot{B}^{\frac{d}{p}+1}_{p,1})}+\|u^n_j\|_{L^1_t(\dot{B}^{\frac{d}{p}+2}_{p,1})})
+\|b^n_j\|_{\dot{B}^{\frac{d}{p}+1}_{p,1}}\notag\\
&\leq 2^j(\|b^{n}_0\|_{\dot{B}^{\frac{d}{p}}_{p,1}}+\|\dot{S}_ju^{n}_0\|_{\dot{B}^{\frac{d}{p}-1}_{p,1}})\\
&\quad +C_{E_0}\int_{0}^{t}\|u^n_j\|_{\dot{B}^{\frac{d}{p}}_{p,1}}\|u^n_j\|_{\dot{B}^{\frac{d}{p}+1}_{p,1}}
+\|b^n_j\|_{\dot{B}^{\frac{d}{p}}_{p,1}}\|b^n_j\|_{\dot{B}^{\frac{d}{p}+1}_{p,1}}
+\|u^n_j\|_{\dot{B}^{\frac{d}{p}+1}_{p,1}}\|b^n_j\|_{\dot{B}^{\frac{d}{p}+1}_{p,1}}ds\notag\\
&\leq C_{E_0,j}[\|b^{n}_0\|_{\dot{B}^{\frac{d}{p}}_{p,1}}+\|u^{n}_0\|_{\dot{B}^{\frac{d}{p}-1}_{p,1}}]\\
&\quad +\int_{0}^{t}\|u^n_j\|_{\dot{B}^{\frac{d}{p}+1}_{p,1}}\|u^n_j\|_{\dot{B}^{\frac{d}{p}}_{p,1}}
+\|b^n_j\|_{\dot{B}^{\frac{d}{p}}_{p,1}}\|b^n_j\|_{\dot{B}^{\frac{d}{p}+1}_{p,1}}
+\|u^n_j\|_{\dot{B}^{\frac{d}{p}+1}_{p,1}}\|b^n_j\|_{\dot{B}^{\frac{d}{p}+1}_{p,1}}ds\notag\\
&\leq C'_{E_0,j}(\|b^{n}_0\|_{\dot{B}^{\frac{d}{p}}_{p,1}}+\|u^{n}_0\|_{\dot{B}^{\frac{d}{p}-1}_{p,1}}),
\end{align}
which along with the Gronwall inequality leads to
\begin{align}
\frac{\eta}{2}(\|u^n_j\|_{\dot{B}^{\frac{d}{p}}_{p,1}}+\|u^n_j\|_{L^2_t\dot{B}^{\frac{d}{p}+1}_{p,1}}+\|u^n_j\|_{L^1_t\dot{B}^{\frac{d}{p}+2}_{p,1}})+\|b^n_j\|_{\dot{B}^{\frac{d}{p}+1}_{p,1}} \leq C'_{E_0,j}(\|b^{n}_0\|_{\dot{B}^{\frac{d}{p}}_{p,1}}+\|u^{n}_0\|_{\dot{B}^{\frac{d}{p}-1}_{p,1}}).
\end{align}

For fixed $j$, letting $\delta^n u=u^n_j-u^{\infty}_j$ and $\delta^n b=b^n_j-b^{\infty}_j$, we have
\begin{equation}\label{sp2}
\left\{\begin{array}{lll}
\delta^nu_t-\Delta\delta^n u+u^n_j\nabla\delta^n u+\delta^nu\nabla u^{\infty}_j+\nabla (P^n_j-P^{\infty}_j)=b^n_j\nabla \delta^nb+\delta^nb\nabla b^{\infty}_j,\\
\delta^n b_t+u^n_j\nabla\delta^n b+\delta^nu\nabla b^{\infty}_j=b^n_j\nabla\delta^n u+\delta^n b\nabla u^{\infty}_j ,\\
(\delta^nu,\delta^nb)|_{t=0}=(\dot{S}_ju^n_0,\dot{S}_jb^n_0).
\end{array}\right.
\end{equation}
Multiplying both sides of the first equation in (\ref{sp2}) by $\lambda_j$ ($\lambda_j$ is determined later) and applying Lemma \ref{heat} for (\ref{sp2}), we have
\begin{align}\label{sp3}
&\lambda_j(\|\delta^nu\|_{\dot{B}^{\frac{d}{p}-1}_{p,1}}+\|\delta^nu\|_{L^2_t(\dot{B}^{\frac{d}{p}}_{p,1})}+\|\delta^nu\|_{L^1_t(\dot{B}^{\frac{d}{p}+1}_{p,1})})\notag\\
&\leq \lambda_j\|\dot{S}_j(u^{n}_0-u^{\infty}_0)\|_{\dot{B}^{\frac{d}{p}-1}_{p,1}}
+C\lambda_j\|u^n_j,u^{\infty}_j\|_{L^2_t(\dot{B}^{\frac{d}{p}}_{p,1})}\|\delta^nu\|_{L^2_t\dot{B}^{\frac{d}{p}}_{p,1}}
+C\lambda_j\int_{0}^{t}\|b^n_j,b^{\infty}_j\|_{\dot{B}^{\frac{d}{p}}_{p,1}}\|\delta^nb\|_{\dot{B}^{\frac{d}{p}}_{p,1}}ds\notag\\
&\leq \lambda_j\|u^{n}_0-u^{\infty}_0\|_{\dot{B}^{\frac{d}{p}-1}_{p,1}}
+\frac{\lambda_j}{2}\|\delta^nu\|_{L^2_t(\dot{B}^{\frac{d}{p}}_{p,1})}+C\lambda_j\int_{0}^{t}C_{E_0}\|\delta^nb\|_{\dot{B}^{\frac{d}{p}}_{p,1}}ds,
\end{align}
where $\|u^n_j,u^{\infty}_j\|_{L^2_t\dot{B}^{\frac{d}{p}}_{p,1}}\leq 4a\leq \frac{1}{2C}$ by (\ref{xxiugailsp2}). Taking advantage of Lemma \ref{priori estimate}, we get
\begin{align}\label{sp33}
\|\delta^nb\|_{\dot{B}^{\frac{d}{p}}_{p,1}}
&\leq \|\dot{S}_j(b^{n}_0-b^{\infty}_0)\|_{\dot{B}^{\frac{d}{p}}_{p,1}}
+\int_{0}^{t}\|\delta^nu\|_{\dot{B}^{\frac{d}{p}}_{p,1}}\|b^{\infty}_j\|_{\dot{B}^{\frac{d}{p}+1}_{p,1}}
+\|b^n_j\|_{\dot{B}^{\frac{d}{p}}_{p,1}}\|\delta^nu\|_{\dot{B}^{\frac{d}{p}+1}_{p,1}}
+\|u^{\infty}_j\|_{\dot{B}^{\frac{d}{p}+1}_{p,1}}\|\delta^nb\|_{\dot{B}^{\frac{d}{p}}_{p,1}}ds\notag\\
&\leq \|b^{n}_0-b^{\infty}_0\|_{\dot{B}^{\frac{d}{p}}_{p,1}}+C_{E_0,j}(\|\delta^nu\|_{L^1_t(\dot{B}^{\frac{d}{p}+1}_{p,1})}+\|\delta^nu\|_{L^2_t(\dot{B}^{\frac{d}{p}}_{p,1})})
+\int_{0}^{t}\|u^{\infty}_j\|_{\dot{B}^{\frac{d}{p}+1}_{p,1}}\|\delta^nb\|_{\dot{B}^{\frac{d}{p}}_{p,1}}ds.
\end{align}
Combining \eqref{sp3} and \eqref{sp33}, selecting $\lambda_j$ large enough such that $\lambda_j>4(C_{E_0,j}+1)$, for fixed $j$ we obtain that
\begin{align}\label{sp333}
&\frac{\lambda_j}{4}\|\delta^nu\|_{\dot{B}^{\frac{d}{p}-1}_{p,1}\cap L^2_t(\dot{B}^{\frac{d}{p}}_{p,1})\cap L^1_t(\dot{B}^{\frac{d}{p}+1}_{p,1})}+\|\delta^nb\|_{{\dot{B}^{\frac{d}{p}}_{p,1}}}\notag\\
&\leq C_{E_0,j}(\|b^{n}_0-b^{\infty}_0\|_{\dot{B}^{\frac{d}{p}}_{p,1}}+\|u^{n}_0-u^{\infty}_0\|_{\dot{B}^{\frac{d}{p}-1}_{p,1}})
+\int_{0}^{t}C_{E_0,j}\|\delta^nb\|_{\dot{B}^{\frac{d}{p}}_{p,1}}ds
\rightarrow 0,\quad n\rightarrow \infty,
\end{align}
where the last inequality is based on the Gronwall inequality.
 This implies that for any fixed $j$, we have
\begin{align}\label{sp8}
\|u^n_j-u^{\infty}_j\|_{L^{\infty}_{t}(\dot{B}^{\frac{d}{p}-1}_{p,1})\cap L^1_{t}(\dot{B}^{\frac{d}{p}+1}_{p,1})}+\|b^n_j-b^{\infty}_j\|_{L^{\infty}_{t}(\dot{B}^{\frac{d}{p}}_{p,1})} \rightarrow 0,\quad n\rightarrow\infty .
\end{align}
\quad\\
\textbf{Step 2. Estimate }$\|u^n-u^n_j\|_{L^{\infty}_{T}(\dot{B}^{\frac{d}{p}-1}_{p,1})\cap L^1_{T}(\dot{B}^{\frac{d}{p}+1}_{p,1})}$ \textbf{for any $n\in\mathbb{N}\cup \{\infty\}$} .

Letting $\delta_j u=u^n-u^{n}_j$ and $\delta_j b=b^n-b^{n}_j$, then we have
\begin{equation}\label{spp1}
\left\{\begin{array}{lll}
\delta_ju_t-\Delta\delta_j u+u^n\nabla\delta_j u+\delta_ju\nabla u^n_j+\nabla (P^n-P^n_j)=b^n\nabla \delta_jb+\delta_jb\nabla b^n_j,\\
\delta_j b_t+u^n\nabla\delta_j b+\delta_ju\nabla b^n_j=b^n\nabla\delta_j u+\delta_j b\nabla u^n_j ,\\
(\delta_ju_0,\delta_jb_0)|_{t=0}=((Id-S_j)u^n_0,(Id-S_j)b^n_0).
\end{array}\right.
\end{equation}
By Lemma \ref{zhibiao}, for $p\leq 2d$ we have
$
\|fg\|_{\dot{B}^{\frac{d}{p}-1}_{p,\infty}}\leq \|f\|_{\dot{B}^{\frac{d}{p}-1}_{p,\infty}}\|g\|_{\dot{B}^{\frac{d}{p}}_{p,1}}.
$
Using Lemmas \ref{heat}-\ref{priori estimate} to \eqref{spp1}, we have
\begin{align}\label{spp2}
&\|\delta_ju\|_{\dot{B}^{\frac{d}{2}-2}_{p,\infty}}+\|\delta_ju\|_{\widetilde{L}^2_t(\dot{B}^{\frac{d}{p}-1}_{p,\infty})}+\|\delta_ju\|_{\widetilde{L}^1_t(\dot{B}^{\frac{d}{p}}_{p,\infty})}\notag\\
&\leq \|(Id-\dot{S}_j)u^{n}_0\|_{\dot{B}^{\frac{d}{2}-2}_{p,1}}
+C\|u^n_j,u^n\|_{\widetilde{L}^2_t(\dot{B}^{\frac{d}{p}}_{p,1})}\|\delta_j u\|_{\widetilde{L}^2_t(\dot{B}^{\frac{d}{p}-1}_{p,\infty})}
+\int_{0}^{t}\|b^n_j,b^n\|_{\dot{B}^{\frac{d}{p}}_{p,1}}\|\delta_j b\|_{\dot{B}^{\frac{d}{p}-1}_{p,\infty}}ds\notag\\
&\leq 2^{-j}\|(Id-S_j)u^{n}_0\|_{\dot{B}^{\frac{d}{p}-1}_{p,1}}
+\frac{1}{2}\|\delta_j u\|_{\widetilde{L}^2_t(\dot{B}^{\frac{d}{p}-1}_{p,\infty})}
+\int_{0}^{t}C_{E_0}\|\delta_jb\|_{\dot{B}^{\frac{d}{p}-1}_{p,\infty}}ds,
\end{align}
where we used the fact that $\|u^n_j,u^n\|_{L^2_{T}(\dot{B}^{\frac{d}{p}}_{p,1})} \leq 4a\leq\frac{1}{2C}$, and
\begin{align}\label{spp22}
\|\delta_j b\|_{\dot{B}^{\frac{d}{p}-1}_{p,\infty}}
&\leq \|(Id-S_j)b^{n}_0\|_{\dot{B}^{\frac{d}{p}-1}_{p,1}}
+\int_{0}^{t}[\|b^n_j,b^n\|_{\dot{B}^{\frac{d}{p}}_{p,1}}\|\delta_j u\|_{\dot{B}^{\frac{d}{p}}_{p,1}}
+\|u^n_j\|_{\dot{B}^{\frac{d}{p}+1}_{p,1}}\|\delta_jb\|_{\dot{B}^{\frac{d}{p}-1}_{p,\infty}}ds\notag\\
&\leq 2^{-j}\|(Id-S_j)b^{n}_0\|_{\dot{B}^{\frac{d}{p}}_{p,1}}
+C_{E_0}\|\delta_j u\|_{L^1_t(\dot{B}^{\frac{d}{p}}_{p,1})},
\end{align}
where we used the fact that $\|(Id-S_j)v\|_{\dot{B}^{\frac{d}{p}-m}_{p,1}}\leq C\|(Id-S_j)v\|_{\dot{B}^{\frac{d}{p}}_{p,1}}2^{-m}, m>0$, and the last inequality is based on the Gronwall inequality.

By interpolation, it follows that
\begin{align}\label{interpolation}
\|\delta_j u\|_{L^1_t(\dot{B}^{\frac{d}{p}}_{p,1})}
\leq C\|\delta_j u\|_{\widetilde{L}^1_t(\dot{B}^{\frac{d}{p}}_{p,\infty})} ln(e+\frac{\|\delta_j u\|_{L^1_t(\dot{B}^{\frac{d}{p}-1}_{p,1})}+\|\delta_j u\|_{L^1_t(\dot{B}^{\frac{d}{p}+1}_{p,1})}}{\|\delta_j u\|_{\widetilde{L}^1_t(\dot{B}^{\frac{d}{p}}_{p,\infty})}}),
\end{align}
which together with \eqref{spp2} and \eqref{spp22} yields that
\begin{align}\label{sppp2}
&\|\delta_ju\|_{\dot{B}^{\frac{d}{2}-2}_{p,\infty}}+\|\delta_ju\|_{\widetilde{L}^2_t(\dot{B}^{\frac{d}{p}-1}_{p,\infty})}+\|\delta_ju\|_{\widetilde{L}^1_t(\dot{B}^{\frac{d}{p}}_{p,\infty})}\notag\\
&\leq C_{E_0}(\|(Id-S_j)u^{n}_0\|_{\dot{B}^{\frac{d}{p}-1}_{p,1}}+\|(Id-S_j)b^{n}_0\|_{\dot{B}^{\frac{d}{p}}_{p,1}})
+ C_{E_0}\int_{0}^{t}\|\delta_ju\|_{\widetilde{L}^1_s(\dot{B}^{\frac{d}{p}}_{p,\infty})}
ln(e+\frac{C_{E_0}}{\|\delta_ju\|_{\widetilde{L}^1_s(\dot{B}^{\frac{d}{p}}_{p,\infty})}})ds.
\end{align}
By Lemma \ref{osgood} with $\mu(r)=rln(e+\frac{C_{E_0}}{r})$, $\gamma(s)=C_{E_0}$, we obtain
\begin{align}\label{spppp2}
&\|\delta_ju\|_{L^{\infty}_t(\dot{B}^{\frac{d}{2}-2}_{p,\infty})}+\|\delta_ju\|_{\widetilde{L}^2_t(\dot{B}^{\frac{d}{p}-1}_{p,\infty})}+\|\delta_ju\|_{\widetilde{L}^1_t(\dot{B}^{\frac{d}{p}}_{p,\infty})}\notag\\
&\leq C_{E_0}(\|(Id-S_j)u^{n}_0\|_{\dot{B}^{\frac{d}{p}-1}_{p,1}}+\|(Id-S_j)b^{n}_0\|_{\dot{B}^{\frac{d}{p}}_{p,1}})\notag\\
&\rightarrow 0,\quad j\rightarrow\infty,\quad \forall n\in\mathbb{N}\cup\{\infty\} .
\end{align}
Thus, by \eqref{spp22} and \eqref{interpolation} we have
\begin{align}\label{spppp3}
\|\delta_jb\|_{L^{\infty}_t(\dot{B}^{\frac{d}{p}-1}_{p,\infty})},\|\delta_ju\|_{L^1_t(\dot{B}^{\frac{d}{p}}_{p,1})}\rightarrow 0,\quad j\rightarrow\infty,\quad\forall n\in\mathbb{N}\cup\{\infty\}.
\end{align}

Next we estimate $\|\delta_ju\|_{L^{\infty}_{t}(\dot{B}^{\frac{d}{p}-1}_{p,1})\cap (L^1_{t}\dot{B}^{\frac{d}{p}+1}_{p,1})}$. Similarly, we have
\begin{align}\label{spp3}
&\|\delta_ju\|_{L^{\infty}_{t}(\dot{B}^{\frac{d}{p}-1}_{p,1})}
+\|\delta_ju\|_{L^2_{t}(\dot{B}^{\frac{d}{p}}_{p,1})}
+\|\delta_ju\|_{L^1_{t}(\dot{B}^{\frac{d}{p}+1}_{p,1})}\notag\\
&\leq \|(Id-\dot{S}_j)u^n_0\|_{B^{\frac{d}{p}-1}_{p,1}}
+C\|u^n_j,u^n\|_{L^2_t(\dot{B}^{\frac{d}{p}}_{p,1})}\|\delta_ju\|_{L^2_t(\dot{B}^{\frac{d}{p}}_{p,1})}
+C\int_{0}^{t}\|b^n_j\|_{\dot{B}^{\frac{d}{p}}_{p,1}}\|\delta_jb\|_{\dot{B}^{\frac{d}{p}}_{p,1}}ds \notag\\
&\leq  \|(Id-\dot{S}_j)u^n_0\|_{B^{\frac{d}{p}-1}_{p,1}}
+\frac{1}{2}\|\delta_ju\|_{L^2_t(\dot{B}^{\frac{d}{p}}_{p,1})}
+\int_{0}^{t}C_{E_0}\|\delta_jb\|_{\dot{B}^{\frac{d}{p}}_{p,1}}ds,
\end{align}
which implies that
\begin{align}\label{spp33}
\|\delta_ju\|_{L^{\infty}_{t}(\dot{B}^{\frac{d}{p}-1}_{p,1})\cap L^1_{t}(\dot{B}^{\frac{d}{p}+1}_{p,1})}\leq  C\|(Id-\dot{S}_j)u^n_0\|_{B^{\frac{d}{p}-1}_{p,1}}
+C_{E_0}\int_{0}^{t}\|\delta_jb\|_{\dot{B}^{\frac{d}{p}}_{p,1}}ds,\quad \forall n\in\mathbb{N}\cup\{\infty\}.
\end{align}
Thus, we must combine the estimation of (\ref{spp33}) with $\|\delta_jb\|_{\dot{B}^{\frac{d}{p}}_{p,1}}$ to prove the continuous dependence of $(\delta_ju,\delta_jb)$.\\
\quad\\
\textbf{Step 3. Estimate } $\|b^n-b^n_j\|_{L^{\infty}_{t}(\dot{B}^{\frac{d}{p}}_{p,1})}$ \textbf{for any $n\in\mathbb{N}\cup \{\infty\}$} .

Define that $b^{n}_{\infty}:=b^{n}$, $u^{n}_{\infty}:=u^{n}$ and recall the equations of $b^n_j$ with $n,j\in\mathbb{N}\cup\{\infty\}$:
\begin{equation}\label{equ1-1-5}
  \left\{\begin{array}{l}
   \frac{d}{dt}b^n_j+u^n_j\nabla b^n_j=b^n_j\nabla u^n_j,  \\
   b^n_j(0,x)=\dot{S}_jb^n_0.
  \end{array}\right.
\end{equation}
We let $b^n_j:=w^n_j+z^n_j$ such that
\begin{equation}\label{spp5}
\left\{\begin{array}{lll}
\frac{d}{dt}w^n_j+u^n_j\nabla w^n_j=F^{\infty},\\
w^n_j|_{t=0}=b^n_0,
\end{array}\right.
\end{equation}
and
\begin{equation}\label{spp6}
\left\{\begin{array}{lll}
\frac{d}{dt}z^n_j+u^n_j\nabla z^n_j=F^{j}-F^{\infty},\\
z^n_j|_{t=0}=\dot{S}_jb^n_0-b^n_0,
\end{array}\right.
\end{equation}
where $ F^{j}:=b^n_j\nabla u^n_j$
and $F^{\infty}:=b^n_{\infty}\nabla u^n_{\infty}$.

Since $F^{\infty},F^{j}$ are bounded in $L^1_{T}(\dot{B}^{\frac{d}{p}}_{p,1})\cap L^2_{T}(\dot{B}^{\frac{d}{p}-1}_{p,1})$, by Remark \ref{Besov}, we deduce that (\ref{spp5}) and (\ref{spp6}) have a unique solution $w^n_j,z^n_j\in C_{T}(\dot{B}^{\frac{d}{p}}_{p,1})$.

Our main idea is to verify that
$(w^n_j,z^n_j)\rightarrow (w^n_{\infty},0)\text{ in }\dot{B}^{\frac{d}{p}}_{p,1}$ for any $n\in\mathbb{N}\cup \{\infty\}$, which implies that
$ b^n_j\rightarrow b^n_{\infty}\text{ in }\dot{B}^{\frac{d}{p}}_{p,1}.$
For this purpose, we divide the verification into the following three small parts.

Firstly, we estimate $\|w^n_j-w^n_{\infty}\|_{L^{\infty}_{t}(\dot{B}^{\frac{d}{p}}_{p,1})}$.
Similarly to \eqref{frequence}, we see that
\begin{align}\label{sppbuchong1}
\|w^n_j-w^n_{\infty}\|_{L^{\infty}_{T}(\dot{B}^{\frac{d}{p}}_{p,1})}\leq \|w^n_j-w^n_{jk}\|_{L^{\infty}_{T}(\dot{B}^{\frac{d}{p}}_{p,1})}+\|w^n_{jk}-w^n_{\infty k}\|_{L^{\infty}_{T}(\dot{B}^{\frac{d}{p}}_{p,1})}+\|w^n_{\infty k}-w^n_{\infty}\|_{L^{\infty}_{T}(\dot{B}^{\frac{d}{p}}_{p,1})},
\end{align}
where
\begin{equation}\label{wnjk}
\left\{\begin{array}{lll}
\frac{d}{dt}w^n_{j k}+u^n_{j}\nabla(w^n_{j k})=\dot{S}_kF_{\infty},\\
w^n_{jk}|_{t=0}=\dot{S}_kb^n_0.
\end{array}\right.
\end{equation}
\romannumeral1. Estimate $\|w^n_{jk}-w^n_{\infty k}\|_{L^{\infty}_{t}(\dot{B}^{\frac{d}{p}}_{p,1})}$ for fixed $k$ . \\
From (\ref{wnjk}) we deduce that:
\begin{equation}\label{spp7}
\left\{\begin{array}{lll}
\frac{d}{dt}(w^n_{jk}-w^n_{\infty k})+u^n_j\nabla(w^n_{jk}-w^n_{\infty k})=-(u^n_j-u^n_{\infty})\nabla w^n_{\infty k},\\
(w^n_{jk}-w^n_{\infty k})|_{t=0}=0.
\end{array}\right.
\end{equation}
By Lemma \ref{priori estimate} we have
$$\|w^n_{\infty k}\|_{L^{\infty}_{T}(\dot{B}^{\frac{d}{p}+1}_{p,1})}\leq
\|\dot{S}_kb^{\infty}_0\|_{\dot{B}^{\frac{d}{p}+1}_{p,1}}
+\|\dot{S}_kF^{\infty}\|_{L^1_{T}(\dot{B}^{\frac{d}{p}+1}_{p,1})}\leq 2^k\|b^{\infty}_0\|_{\dot{B}^{\frac{d}{p}}_{p,1}}
+2^k\|F^{\infty}\|_{L^1_{T}(\dot{B}^{\frac{d}{p}}_{p,1})}\leq 2^kC_{E_0},$$
and
\begin{align}\label{spp8}
\|w^n_{jk}-w^n_{\infty k}\|_{L^{\infty}_{t}(\dot{B}^{\frac{d}{p}}_{p,1})}&\leq \int_{0}^{t} \|u^n_j-u^n_{\infty}\|_{\dot{B}^{\frac{d}{p}}_{p,1}}\|w^n_{\infty k}\|_{\dot{B}^{\frac{d}{p}+1}_{p,1}}ds\notag\\
&\leq \int_{0}^{t} \|u^n_j-u^n_{\infty}\|_{\dot{B}^{\frac{d}{p}}_{p,1}}(2^kC_{E_0})ds\notag\\
&\leq 2^kC_{E_0}\|u^n_j-u^n_{\infty}\|_{L^1_t(\dot{B}^{\frac{d}{p}}_{p,1})}\notag\\
& \rightarrow 0,\quad j\rightarrow\infty,
\end{align}
where the last inequality is based on (\ref{spppp3}).\\
\romannumeral2. Estimate $\|w^n_j-w^n_{jk}\|_{L^{\infty}_{t}(\dot{B}^{\frac{d}{p}}_{p,1})}$ for any $j\in\mathbb{N}\cup\{\infty\}$.\\
From (\ref{spp5}) and (\ref{wnjk}), we obtain
\begin{equation}\label{spp11}
\left\{\begin{array}{lll}
\frac{d}{dt}(w^n_{j}-w^n_{jk})+u^n_j\nabla(w^n_{j}-w^n_{jk})=(Id-\dot{S}_k)F_{\infty},\\
(w^i_{j}-w^i_{jk})|_{t=0}=(Id-\dot{S}_k)b_0.
\end{array}\right.
\end{equation}
By Lemma \ref{priori estimate}, we have
\begin{align}\label{spp12}
\|w^n_{j}-w^n_{jk}\|_{L^{\infty}_{t}(\dot{B}^{\frac{d}{p}}_{p,1})}&\leq \|(Id-\dot{S}_k)b_0\|_{\dot{B}^{\frac{d}{p}}_{p,1}}+\int_{0}^{t}\|(Id-\dot{S}_k)F_{\infty}\|_{\dot{B}^{\frac{d}{p}}_{p,1}}ds    \notag\\
&\rightarrow 0,\quad k\rightarrow\infty .
\end{align}

 By (\ref{spp12}), for any $\epsilon>0$, there exists $N$ (independent on $j$ and $n$) such that when $k\geq N$ we have
$$\|w^n_j-w^n_{jk}\|_{L^\infty_t(\dot{B}^{\frac{d}{p}}_{p,1})}\leq\frac{\epsilon}{3},\quad
\|w^n_{\infty}-w^n_{\infty k}\|_{L^\infty_t(\dot{B}^{\frac{d}{p}}_{p,1})}\leq\frac{\epsilon}{3}.$$
For this $\epsilon$ and $k\geq N$, by (\ref{spp8}), there exists $M$ (independent on $n$) such that when $j\geq M$ we have
$$\|w^n_{jk}-w^n_{\infty k}\|_{L^\infty_t(\dot{B}^{\frac{d}{p}}_{p,1})}\leq\frac{\epsilon}{3}.$$
Thus we get
\begin{align}
\|w^n_j-w^n_{\infty}\|_{L^\infty_t(\dot{B}^{\frac{d}{p}}_{p,1})}\leq \|w^n_j-w^n_{jk}\|_{L^\infty_t(\dot{B}^{\frac{d}{p}}_{p,1})}+\|w^n_{\infty}-w^n_{\infty k}\|_{L^\infty_t(\dot{B}^{\frac{d}{p}}_{p,1})}+\|w^n_{jk}-w^n_{\infty k}\|_{L^\infty_t(\dot{B}^{\frac{d}{p}}_{p,1})}\leq\epsilon ,
\end{align}
that is
\begin{align}\label{spp14}
\|w^n_j-w^n_{\infty}\|_{L^{\infty}_{t}(\dot{B}^{\frac{d}{p}}_{p,1})}\rightarrow 0,\quad j\rightarrow\infty ,\quad\forall n\in \mathbb{N}\cup \{\infty\} .
\end{align}

Next, we estimate $\|z^n_j\|_{L^{\infty}_{T}(\dot{B}^{\frac{d}{p}}_{p,1})}$. Recall that
\begin{equation}\label{spp15}
\left\{\begin{array}{lll}
\frac{d}{dt}z^n_j+u^n_j\nabla z^n_j=F^{j}-F^{\infty},\\
z^i_j|_{t=0}=(\dot{S}_j-Id)b_0,
\end{array}\right.
\end{equation}
where $F^{j}:=b^n_j\nabla u^n_j$. By the Bony decomposition, we have
\begin{align}\label{spp16}
&\|F^{j}-F^{\infty}\|_{\dot{B}^{\frac{d}{p}}_{p,1}} \notag\\
&\leq \|(b^n_j-b^n_{\infty})\nabla u^n_j\|_{\dot{B}^{\frac{d}{p}}_{p,1}} +\|b^n_{\infty}\nabla (u^n_j-u^n_{\infty})\|_{\dot{B}^{\frac{d}{p}}_{p,1}} \notag\\
&\leq \|b^n_j-b^n_{\infty}\|_{\dot{B}^{\frac{d}{p}}_{p,1}}\|u^n_j\|_{\dot{B}^{\frac{d}{p}+1}_{p,1}}+ \|u^n_j-u^n_{\infty}\|_{\dot{B}^{\frac{d}{p}+1}_{p,1}}\|b^n_{\infty}\|_{\dot{B}^{\frac{d}{p}}_{p,1}}  \notag\\
&\leq (\|z^n_j\|_{\dot{B}^{\frac{d}{p}}_{p,1}}+\|w^n_j-w^n_{\infty}\|_{\dot{B}^{\frac{d}{p}}_{p,1}})\|u^n_j\|_{\dot{B}^{\frac{d}{p}+1}_{p,1}}
+\|u^n_j-u^n_{\infty}\|_{\dot{B}^{\frac{d}{p}+1}_{p,1}}\|b^n_{\infty}\|_{\dot{B}^{\frac{d}{p}}_{p,1}},
\end{align}
where the last inequality is based on $\|b^n_j-b^n_{\infty}\|_{\dot{B}^{\frac{d}{p}}_{p,1}}\leq \|w^n_j-w^n_{\infty}\|_{\dot{B}^{\frac{d}{p}}_{p,1}}+\|z^n_j\|_{\dot{B}^{\frac{d}{p}}_{p,1}}$. Combining \eqref{spp33} , \eqref{spp16} and \eqref{spp15}, we have
\begin{align}
&\|z^n_j\|_{\dot{B}^{\frac{d}{p}}_{p,1}} \notag\\
\leq &  \|(Id-\dot{S}_j)b^n_0\|_{\dot{B}^{\frac{d}{p}}_{p,1}}+C\int_{0}^{t}(\|z^n_j\|_{\dot{B}^{\frac{d}{p}}_{p,1}}+\|w^n_j-w^n_{\infty}\|_{\dot{B}^{\frac{d}{p}}_{p,1}})\|u^n_j\|_{\dot{B}^{\frac{d}{p}+1}_{p,1}}ds
+C_{E_0}\|u^n_j-u^n_{\infty}\|_{L^1_t\dot{B}^{\frac{d}{p}+1}_{p,1}} \notag\\
\leq &  C_{E_0}(\|(Id-\dot{S}_j)b^n_0\|_{\dot{B}^{\frac{d}{p}}_{p,1}}+\|(Id-S_j)u^n_0\|_{\dot{B}^{\frac{d}{p}-1}_{p,1}}
+\|w^n_j-w^n_{\infty}\|_{L^{\infty}_{t}(\dot{B}^{\frac{d}{p}}_{p,1})})(C_{E_0}+\|u^n_j\|_{L^1_t(\dot{B}^{\frac{d}{p}+1}_{p,1})})\notag\\
&+\int_{0}^{t}C_{E_0}(\|u^n_j\|_{\dot{B}^{\frac{d}{p}+1}_{p,1}}+1)\|z^n_j\|_{\dot{B}^{\frac{d}{p}}_{p,1}}ds.
\end{align}
Applying the Gronwall inequality and \eqref{spp14}, we obtain
\begin{align}\label{spp18}
\|z^n_j\|_{L^{\infty}_{t}(\dot{B}^{\frac{d}{p}}_{p,1})}
\leq &  C_{E_0}(\|(Id-S_j)b^n_0\|_{\dot{B}^{\frac{d}{p}}_{p,1}}+\|(Id-S_j)u^n_0\|_{\dot{B}^{\frac{d}{p}-1}_{p,1}}+
\|w^n_j-w^n_{\infty}\|_{\dot{B}^{\frac{d}{p}}_{p,1}}) \notag\\
\rightarrow & 0,\quad j\rightarrow\infty,\quad\forall n\in \mathbb{N}\cup \{\infty\} .
\end{align}

Finally, combining \eqref{spp14} and \eqref{spp18}, we have
\begin{align}\label{spp19}
\|b^n_j-b^n_{\infty}\|_{L^{\infty}_{t}(\dot{B}^{\frac{d}{p}}_{p,1})}\rightarrow 0,\quad j\rightarrow\infty,\quad \forall n \in \mathbb{N}\cup \{\infty\},
\end{align}
and
\begin{align}\label{spp1919}
&\|u^n_j-u^n_{\infty}\|_{L^{\infty}_{t}(\dot{B}^{\frac{d}{p}-1}_{p,1})\cap L^1_{t}(\dot{B}^{\frac{d}{p}+1}_{p,1})}\notag\\
&\leq  C\|(Id-\dot{S}_j)u^n_0\|_{B^{\frac{d}{p}-1}_{p,1}}
+C_{E_0}\int_{0}^{t}\|b^n_j-b^n_{\infty}\|_{\dot{B}^{\frac{d}{p}}_{p,1}}ds\notag\\
&\rightarrow 0,\quad j\rightarrow\infty,\quad \forall n\in \mathbb{N}\cup \{\infty\}.
\end{align}
Thus, we complete the estimations of $\|b^n_j-b^n_{\infty}\|_{L^{\infty}_{t}(\dot{B}^{\frac{d}{p}}_{p,1})}$  and $\|u^n_j-u^n_{\infty}\|_{L^{\infty}_{t}(\dot{B}^{\frac{d}{p}-1}_{p,1})\cap L^1_{t}(\dot{B}^{\frac{d}{p}+1}_{p,1})}$.\\
\quad\\
\textbf{Step 4. Proof of the continuous dependence}

Finally, combining (\ref{spp1919}) and (\ref{spp19}), we obtain
\begin{align}\label{spp20}
\|u^n_j-u^n\|_{L^{\infty}_{t}(\dot{B}^{\frac{d}{p}-1}_{p,1})\cap L^1_{t}(\dot{B}^{\frac{d}{p}+1}_{p,1})}
+\|b^n_j-b^n_{\infty}\|_{L^{\infty}_{t}(\dot{B}^{\frac{d}{p}}_{p,1})}\rightarrow 0\quad,j\rightarrow\infty,\forall n\in \mathbb{N}\cup \{\infty\}.
\end{align}
By (\ref{spp20}), for any $ \epsilon>0$, there exists $N$ (independent of $n$) such that when $j\geq N$ we have
$$\|u^n-u^n_j\|_{L^{\infty}_{t}(\dot{B}^{\frac{d}{p}-1}_{p,1})\cap L^1_{t}(\dot{B}^{\frac{d}{p}+1}_{p,1})}\leq\frac{\epsilon}{3}\quad \forall n\in \mathbb{N}^+\cup \{\infty\}.$$
For this $\epsilon$ and $j\geq N$, by (\ref{sp8}) there exists $M$ such that when $n\geq M$, we get
$$\|u^n_j-u^{\infty}_j\|_{L^{\infty}_{t}(\dot{B}^{\frac{d}{p}-1}_{p,1})\cap L^1_{t}(\dot{B}^{\frac{d}{p}+1}_{p,1})}\leq\frac{\epsilon}{3}.$$
Thus we deduce
\begin{align}
&\quad\|u^n-u^{\infty}\|_{L^{\infty}_{t}(\dot{B}^{\frac{d}{p}-1}_{p,1}\cap L^1_{t}\dot{B}^{\frac{d}{p}+1}_{p,1})}\notag\\
&\leq\|u^n-u^n_j\|_{L^{\infty}_{t}(\dot{B}^{\frac{d}{p}-1}_{p,1})\cap L^1_{t}(\dot{B}^{\frac{d}{p}+1}_{p,1})}+\|u^n_j-u^{\infty}_j\|_{L^{\infty}_{t}(\dot{B}^{\frac{d}{p}-1}_{p,1})\cap L^1_{t}(\dot{B}^{\frac{d}{p}+1}_{p,1})}+\|u^{\infty}_j-u^{\infty}\|_{L^{\infty}_{t}(\dot{B}^{\frac{d}{p}-1}_{p,1}\cap L^1_{t}\dot{B}^{\frac{d}{p}+1}_{p,1})}\notag\\
&\leq \epsilon.
\end{align}
Similarly, we have
\begin{align}
\|b^n-b^{\infty}\|_{L^{\infty}_{t}(\dot{B}^{\frac{d}{p}}_{p,1})}
\leq\|b^n-b^n_j\|_{L^{\infty}_{t}(\dot{B}^{\frac{d}{p}}_{p,1})}
+\|b^n_j-b^{\infty}_j\|_{L^{\infty}_{t}(\dot{B}^{\frac{d}{p}}_{p,1})}
+\|b^{\infty}_j-b^{\infty}\|_{L^{\infty}_{t}(\dot{B}^{\frac{d}{p}}_{p,1})}\leq \epsilon.
\end{align}

This completes the proof of the continuous dependence in $t\in$ $[0,T]$.
\end{proof}

\noindent\textbf{Acknowledgements.} This work was partially supported by National Natural Science Foundation
of China [grant number 11671407 and 11701586], the Macao Science and Technology Development Fund (grant number 0091/2018/A3), Guangdong Special Support Program (grant number 8-2015), and the key project of NSF of  Guangdong province (grant number 2016A030311004).

\phantomsection
\addcontentsline{toc}{section}{\refname}

\bibliographystyle{abbrv} 
\bibliography{Ye-Luo-Yinref}

\end{document}